\newtheorem{claim}{Claim}
\newtheorem{remark}{Remark}
\newcommand{\reals}{\mathbb{R}}
\newcommand{\comment}[1]{}
\newcommand{\R}{\mathbb{R}}
\newcommand{\DiffLip}{\Gamma}
\newcommand{\Imm}{\operatorname{Imm}}
\newcommand{\Diff}{\operatorname{Diff}}
\newcommand{\WFR}{\operatorname{WFR}}
\title{Square Root Normal Fields for Lipschitz surfaces and the Wasserstein Fisher Rao metric
}
\author{Emmanuel Hartman\thanks{Department of Mathematics, Florida State University (\email{ehartman@fsu.edu})}\and Martin Bauer\thanks{Department of Mathematics, Florida State University and University of Vienna (\email{bauer@math.fsu.edu})}\and  Eric Klassen\thanks{Department of Mathematics, Florida State University (\email{klassen@math.fsu.edu})}}
\begin{document}
\maketitle
\begin{abstract}
The Square Root Normal Field (SRNF) framework is a method in the area of shape analysis that defines a (pseudo) distance between unparametrized  surfaces. For piecewise linear (PL) surfaces it was recently proved that the SRNF distance between unparametrized  surfaces is equivalent to the Wasserstein Fisher Rao (WFR) metric on the space of finitely supported measures on $S^2$. In the present article we extend this point of view to a much larger set of surfaces; we show that the SRNF distance on the space of Lipschitz surfaces is equivalent to the WFR distance between Borel measures on $S^2$. For the space of spherical surfaces this result directly allows us to characterize the non-injectivity and the (closure of the) image of the SRNF transform. In the last part of the paper we further generalize this result by showing that the WFR metric for general measure spaces can be interpreted as an  optimization problem over the diffeomorphism group of an independent background space. 
\end{abstract}
\tableofcontents


\section{Introduction}
The investigations of this article are motivated by applications in the area of mathematical shape analysis, which seeks to quantify differences, perform classification, and explain variability for populations of shapes~\cite{younes2010shapes,srivastava2016functional,dryden2016statistical,marron2014overview}. More specifically, the results of this article concern the Square Root Normal Field distance~\cite{jermyn2017elastic} on the space of surfaces and the Wasserstein Fisher Rao metric~\cite{chizat2018interpolating,liero2018optimal} from unbalanced optimal transport. Before we describe the contributions of the current work in more detail, we will briefly summarize some results from these two areas.

{\bf Shape analysis of surfaces:} For the purpose of this article we consider a shape to be a parametrized surface or curve in $\mathbb{R}^d$, where we identify two objects if they only differ by a translation and/or a reparametrization. In practice, it is often of interest to mod out by further shape preserving group actions, such as the groups of rotations or scalings. To keep the presentation simple, we will ignore these additional finite dimensional groups.
Consequently, the resulting shape space is an infinite dimensional, non-linear (quotient) space, which makes the application of statistical techniques to analyse these types of data a highly challenging task. A common approach to overcome these difficulties can be found in the area of geometric statistics~\cite{pennec2006intrinsic,pennec2019riemannian}, in which one develops statistical frameworks based on (Riemannian) geometry. In the context of shape analysis of surfaces or curves, a variety of different metrics have been proposed for this purpose; this includes metrics induced by (right-invariant) metrics on diffeomorphism groups~\cite{younes2010shapes,miller2002metrics} and reparametrization invariant metrics on the space of immersions~\cite{srivastava2016functional,bauer2014overview,michor2007overview}, which are directly related to the investigations of the present article as we will explain next.  

In the latter approach the calculation of the distance (similarity) between two shapes reduces to two tasks: calculating the geodesic distance on the space of immersions (parametrized surfaces or curves, resp.) and minimizing over the action of the shape preserving group actions, i.e., diffeomorphisms of the parameter space and translations in $\mathbb R^d$. In general there do not exist any explicit formulas for geodesics and thus computing solutions to the geodesic boundary value problems (and thus of the distance) is a highly non-trivial task and usually has to be solved using numerical optimization techniques, see eg.~\cite{hartman2022elastic,bauer2017numerical}. 

For specific examples of Riemannian metrics, however, simplifying transformations have been developed that allow for explicit calculations of geodesics and geodesic distance. This includes in particular the family of $G^{a,b}$-metrics on the space of curves~\cite{bauer2022elastic,needham2020simplifying,mio2007shape,younes1998computable},
a family of first order Sobolev type metrics, that are often called \emph{elastic} metrics due to their connections to linear elasticity theory; see eg.~\cite{mio2007shape,charon2022shape,bauer2022elastic}.  For the specific choice of parameters $a=1$, $b=1/2$ the corresponding transformation  is the so-called Square-Root-Velocity (SRV) transform~\cite{srivastava2010shape}, which is widely used in applications; see~\cite{srivastava2016functional} and the references therein. The advantage of this transformation is that it reduces the shape comparison problem to a single optimization over the shape preserving group actions, i.e., in the setting of the present article over reparametrizations and translations. This computational simplification has led to both the development of efficient algorithms~\cite{woien2022pde,dogan2015fast,srivastava2010shape} and to analytic results on existence of minimizers and optimal parametrizations~\cite{bruveris2016optimal,lahiri2015precise,trouve2000class}. 

The family of elastic $G^{a,b}$ metrics has a natural generalization to a four parameter  family of metrics on the space of surfaces~\cite{su2020shape}. Similarly to the case of curves, simplifying transformations have also been proposed in this more complicated situation~\cite{kurtek2010novel,kurtek2011elastic,jermyn2017elastic,su2020simplifying}.  Notably, as a generalization of the SRV transform, the Square Root Normal Field (SRNF) transformation~\cite{jermyn2017elastic} 
has been introduced. In contrast to the situation for curves, the corresponding Riemannian metric for this transformation is degenerate and, furthermore, it only leads to a first order approximation of the geodesic distance. Nonetheless it defines a reparametrization invariant (pseudo-) distance on the space of surfaces,  which still allows for efficient computations using several methods of approximating the optimization over the diffeomorphism group~\cite{laga2017numerical,bauer2020numerical} and has proven successful in several applications, see~\cite{kurtek2014statistical,joshi2016surface,matuk2020biomedical,laga20214d}. 
and the references therein.

{\bf Unbalanced Optimal transport:} 
The second core theme of the present article can be found in the theory of optimal transport (OT). Since Monge's formulation of OT as a non-convex optimization problem in the space of transport maps, many formulations of the problem have been proposed to give insight to the theoretical properties of the problem as well as efficient methods for computing the solution, see~\cite{villani2003topics,villani2008optimal} for a comprehensive overview on the field.

 In classical optimal transport theory one considers  normalized (probability) distributions. It is, however, important for many applications to relax this normalization assumption and compute transportation plans between arbitrary positive measures. Motivated by this observation the theory of optimal transport has been extended to measures with different masses. This field,  called unbalanced optimal transport, has seen rapid developments in the past years and several different frameworks have been proposed \cite{chizat2018interpolating,liero2016optimal,Lombardi2015,piccoli2014generalized}. Among them is the Wasserstein Fisher Rao (WFR) distance, an interpolating distance between the quadratic Wasserstein metric and the Fisher–Rao metric, that was introduced independently by~\cite{chizat2018interpolating} and~\cite{liero2018optimal}. The WFR distance has been applied to a variety of problems where it is more natural to consider optimal transport in an  unbalanced setting. These applications range from color transfer \cite{chizat2018scaling}, to earthquake epicenter location \cite{Zhou2018TheWM} and document semantic similarity metrics \cite{Wang2020RobustDD}. Because of the growing field of applications, several algorithms have been proposed to compute  the Wasserstein Fisher Rao metric. A variation on the popular Sinkhorn algorithm to solve for an entropy regularized version of the distance was proposed by \cite{chizat2018scaling} and an alternating minimization algorithm that computes an exact solution was introduced in~\cite{bauer2022SRNF}.

\subsection{Contributions of the article}
Recently a new relationship between these two areas (shape analysis and unbalanced optimal transport) has been found. Namely, in~\cite{bauer2022SRNF} it has been shown that for triangulated surfaces the calculation of the SRNF shape distance can be reduced to calculating the WFR distance between their corresponding surface area measures. The presentation in~\cite{bauer2022SRNF} was entirely focused on the discrete (PL) setting and the proof of the result essentially reduced to  algebraic considerations.  In the first part of the present article we build the analytical tools to extend  this result to the infinite dimensional setting,  which contains in particular the original setup of the SRNF distance; the space of smooth surfaces. The main result of this part of our article -- cf. Theorem~\ref{thm:maintheorem} -- shows that the SRNF shape distance between any two Lipschitz surfaces is equal to the WFR distance between their surface area measures. 

As a direct consequence of this result we are able to answer two fundamental questions regarding the SRNF transform: since the inception of the SRNF transform, it has been understood that the map is neither injective nor surjective~\cite{jermyn2017elastic}. Characterizing the image and non-injectivity have, however, remained open problems. Recently a first degeneracy result in the context of closed surfaces has been found~\cite{klassen2020closed}. Using our equivalence result we are able to obtain a characterization of the closure of the image of this transform -- cf. Theorem~\ref{thm:image} -- and a new strong degeneracy result of the corresponding distance (non-injectivity of the transform, resp.) -- cf. Theorem~\ref{cor:degeneracy}.

In the second part we further explore the equivalence result for more general unbalanced optimal transport problems. Generalizations of some of the intermediate results of the first part  allow us to offer a novel formulation of the WFR metric as a diffeomorphic optimization problem -- cf. Theorem~\ref{thm:cone_iso}. 
\textcolor{black}{This theorem is highly related to a classical  result from optimal transport theory: in \cite{OttoPic} Otto proved the existence of  Riemannian submersion from the space of diffeomorphisms equipped with the $L^2$ metric onto the space of probability densities equipped with the Wasserstein metric. This construction was recently extended to the unbalanced setting by Gallou\"{e}t and Vialard in \cite{GALLOUET20184199}, where they constructed an analogous submersion onto the space of all densities equipped with the WFR metric. The main difference of these results to  Theorem~\ref{thm:cone_iso} of the present article is the regularity of the maps (measures) under consideration: the constructions of \cite{OttoPic,GALLOUET20184199} are in the smooth category, whereas our results are formulated for $L^2$-maps (Borel measures, resp.). Further discussion of these results and how they relate to each other are given in Remark~\ref{rem:RelationToCH}.}

\textcolor{black}{Finally, we want to emphasize that the main result of the first part of the article relates the WFR on $S^2$ with a specific choice of parameter to a diffeomorphic optimization problem. Our main result of the second part, Theorem~\ref{thm:cone_iso}, extends} this relationship to the WFR with any choice of parameter defined on any connected, compact, oriented Riemannian manifold $N$. Notably, the space of diffeomorphisms we have to optimize over does not  depend on $N$, but can be chosen as the diffeomorphism group of some background manifold, that only needs to be of dimension greater than or equal to two\textcolor{black}{, which  further distinguishes Theorem~\ref{thm:cone_iso} of the setting considered in~\cite{OttoPic,GALLOUET20184199}.} 

\subsection*{Acknowledgements}
The authors thank FX Vialard and Cy Maor for useful discussions during the preparation of this manuscript. \textcolor{black}{Moreover, we thank the anonymous reviewer for pointing out the relationship of this paper with the results of \cite{OttoPic} and \cite{GALLOUET20184199}.} 
M. Bauer was supported by NSF-grants 1912037 and 1953244 and by FWF grant P 35813-N. E. Hartman was supported by NSF grant DMS-1953244.

\section{Preliminaries}
\subsection{The Wasserstein Fisher Rao Distance}
In the following, we will summarize the Kantorovich formulation of the Wasserstein Fischer Rao distance, as introduced in~\cite{chizat2018unbalanced} \textcolor{black}{ and~\cite{kondratyev2016new}} for measures on a smooth, connected, compact, oriented Riemannian manifold, $N$. Therefore we denote by $\mathcal{M}(N)$ the space of finite Borel measures on $N$. In the Kantorovich formulation of the Wasserstein-Fisher-Rao distance, we will define a functional on the space of semi-couplings. Therefore we first recall the definition of a semi-coupling:
\begin{definition}[Semi-couplings~\cite{chizat2018unbalanced}]
Given $\mu,\nu\in\mathcal{M}(N)$ the set of all \textit{semi-couplings} from $\mu$ to $\nu$ is given by
\begin{equation*}
    \Gamma(\mu,\nu)=\left\{ (\gamma_0,\gamma_1)\in \mathcal{M}(N\times N)^2| (\operatorname{Proj}_0)_\#\gamma_0=\mu,(\operatorname{Proj}_1)_\#\gamma_1=\nu\right\}.
\end{equation*}
\end{definition}  
\noindent To define the Wasserstein-Fisher-Rao distance from $\mu$ to $\nu$ we define a functional on the space of semi-couplings from $\mu$ to $\nu$.
Let $d$ denote the geodesic distance on $N$ and $\delta\in (0,\infty)$. We consider the functional 
\begin{align*}
    J_\delta:\Gamma(\mu,\nu)&\to \R\\
    (\gamma_1,\gamma_2)&\mapsto  4\delta^2\left(\mu(N)+\nu(N)-2\int_{N\times N}\frac{\sqrt{\gamma_1\gamma_2}}{\gamma}(u,v) \overline{\cos}(d(u,v)/2\delta) d\gamma(u,v)\right)
\end{align*}
where $\gamma\in\mathcal{M}(N \times N)$ such that $ \gamma_1,\gamma_2\ll\gamma $. Note that in the case where $N=S^2$, we have $d(u,v)=\cos^{-1}(u\cdot v)$. Thus for $\delta= \frac{1}{2}$,
\begin{align}
    J_\delta(\gamma_1,\gamma_2)=\int_{S^2\times S^2}\left|\sqrt{\frac{\gamma_1}{\gamma}(u,v)}u-\sqrt{\frac{\gamma_1}{\gamma}(u,v)}v\right|^2 d\gamma(u,v).
\end{align}
\begin{definition}[Wasserstein-Fisher-Rao Distance ~\cite{chizat2018unbalanced,liero2018optimal}]
The Wasserstein-Fisher-Rao Distance on $\mathcal{M}(N)$ is given by 
\begin{align}
    \WFR_\delta:\mathcal{M}(N)\times\mathcal{M}(N)&\to\R^{\geq0} \text{ defined via }\\
    (\mu,\nu)&\mapsto \inf\limits_{(\gamma_0,\gamma_1)\in \Gamma(\mu,\nu)} \sqrt{J_\delta(\mu,\nu)}.
\end{align}
\end{definition}

\noindent Some results in this article will specifically apply to the case where $\delta=1/2$. To simplify our notation, we define $J:=J_{1/2}$ and $\WFR:=\WFR_{1/2}$.

\subsection{The Square Root Normal Field Shape Distance}
In mathematical shape analysis, one defines metrics that measure the differences between geometric objects~\cite{younes2010shapes,bauer2014overview,srivastava2016functional,dryden2016statistical}. In this article we consider geometric objects described by unparameterized surfaces which are elements of an infinite dimensional non-linear space modulo several finite and infinite dimensional group action. As a result, computations in this space are difficult and even simple statistical operations are not well defined.  Riemannian geometry can help to overcome these challenges. In such a framework, one considers the space of all surfaces as an infinite dimensional manifold and equips it with a Riemannian metric that is invariant to the group action, which allows one to consider the induced metric on the quotient space. 

For our purposes we will consider immersions of a smooth, connected, compact, oriented Riemannian 2-dimensional manifold, $M$, with or without boundary. \textcolor{black}{We will denote the Lesbegue measure on $M$ by $m$.} We denote the space of all Lipschitz immersions of $M$ into $\reals^3$ by $\Imm(M,\reals^3)$, i.e.,
\begin{align}
\Imm(M,\reals^3)=\{f\in W^{1,\infty}(M,\reals^3): Tf \text{ is inj. a.e.}\}\;.    
\end{align}
As we are interested in unparametrized surfaces, we have to factor out the action of the group of diffeomorphisms. In the context of Lipschitz immersions the natural group of reparametrizations for us to consider is the group of all 
orientation preserving, bi-Lipschitz diffeomorphisms:
 \begin{equation*}
     \DiffLip(M)=\{ \gamma\in W^{1,\infty}(M,M):\; \gamma^{-1}\in W^{1,\infty}(M,M), |D\gamma|>0 \text{ a.e.}\},
     \end{equation*} 
where  $|D\gamma|$ denotes the Jacobian determinant of $\gamma$, which is well-defined as $D\gamma\in L^{\infty}$.
Note that  this reparametrization group acts by composition from the right on $\Imm(M,\reals^3)$. In addition to the action by the reparametrization group, we also want to identify surfaces that only differ by a translation. This leads us to consider the following quotient space:
\begin{align}
&\mathcal S:=\Imm(M,\mathbb R^3)/(\DiffLip(M)\times\operatorname{trans}) 
\end{align}
In the following we will equip $\Imm(M)$ with a reparameterization invariant distance; the so called square root normal field (SRNF) distance. The SRNF map (distance resp.) was originally introduced by Jermyn et al. 
in ~\cite{jermyn2012elastic} for the space of smooth immersions, but it naturally  extends to the space of all Lipschitz surfaces, as demonstrated in~\cite{bauer2022SRNF}. We now recall the definition of this distance.

For any given $f\in\Imm(M,\reals^3)$, the orientation on $M$ allows us to consider the unit normal vector field $n_f:M\to\reals^3$, which is well-defined as an element of $L^{\infty}(M,\reals^3)$. Furthermore, let $\{v,w\}$ be an orthonormal basis of $T_xM$. Then for any $f\in\Imm(M,\reals^3)$ we can  define the area multiplication factor at $x\in M$ via $a_f(x)=|df_x(v)\times df_x(w)|$. The SRNF map is then given by  
\begin{align}
\Phi:\Imm(M,\reals^3)/\operatorname{translations}&\to L^2(M,\reals^3)\\
f&\mapsto q_f \text{ where } q_f(x):=\sqrt{a_f(x)}\;n_f(x).
\end{align}
From this transform we define a distance on $\Imm(M,\reals^3)/\operatorname{translations}$ by \begin{equation*}\label{distfuncdef}
d_{\Imm}(f_1,f_2)=\|\Phi(f_1)-\Phi(f_2)\|_{L^2}.
\end{equation*}
Next we consider a right-action of $\DiffLip(M)$ on $L^2(M,\reals^3)$ 
that is compatible with the mapping $\Phi$. For $q\in L^2(M,\reals^3)$ and $\gamma \in \DiffLip(M)$  we let
\begin{align}
 (q*\gamma)(x)=\sqrt{|D_\gamma(x)|}q(\gamma(x)).
\end{align}
It is easy to check that  the action of $\DiffLip(M)$ on $L^2(M,\reals^3)$ is by linear isometries and that for any $f\in\Imm$ and $\gamma\in\DiffLip$,\begin{equation*}
\Phi(f)*\gamma=\Phi(f\circ\gamma).
\end{equation*}Thus, it follows that the SRNF distance on $\Imm(M,\reals^3)$ is invariant with respect to this action and thus it descends to a (pseudo) distance on the quotient space $\mathcal S$, which is given by
\begin{equation*}
   d_{\mathcal S}([f_1],[f_2])=\inf\limits_{\gamma\in\DiffLip(M)}d(f_1,f_2\circ\gamma),\qquad [f_1],[f_2]\in \mathcal{S}(M)
\end{equation*}
As we will see later the induced (pseudo) distance on the quotient space is highly degenerate.
\subsection{Equivalence of WFR and SRNF in the piecewise linear category}
In~\cite{bauer2022SRNF} an equivalence of the WFR and SRNF distance was shown: for piecewise linear surfaces it was proved that the SRNF distance can be reduced to the WFR distance between finitely supported measures. To formulate this result in detail we 
first associate to every $q\in L^2(M,\mathbb{R}^3)$ a measure on $S^2$; namely, for any open $U\subseteq S^2$,  we define 
\begin{equation*}
    q^*U=\{x\in M|q(x)\neq 0 \text{ and } q(x)/|q(x)| \in U \}
\end{equation*}
and define the map
\begin{align*}\label{eq:psi_def}
     &L^2(M,\mathbb{R}^3)\to\mathcal{M}(S^2)\text{ via }q\mapsto\mu_q\\ 
     &\text{where for } U\subseteq S^2, \mu_q(U)=\int_{q^*U}q(x)\cdot q(x)dm.
\end{align*}
The result proved in \cite{bauer2022SRNF} is then formulated as:
\begin{theorem}\label{thm:maintheorem_old}
Given two piecewise linear surfaces $S_1$ and $S_2$ parameterized by $f$ and $g$, the SRNF shape distance can be computed as an unbalanced transport problem. More precisely, we have  
\begin{equation*}
    d_{\mathcal S}([f],[g])=\inf_{\gamma\in\Gamma(M)}\|q_f-q_g*\gamma\|=\WFR(\mu_{q_f},\mu_{q_g}).
\end{equation*}
where $q_f$ and $q_g$ are the SRNFs of $f$ and $g$ respectively.
\end{theorem}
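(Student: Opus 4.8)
The plan is to treat the two equalities separately. The first, $d_{\mathcal S}([f],[g])=\inf_{\gamma\in\DiffLip(M)}\norm{q_f-q_g*\gamma}$, is purely formal: by definition of the quotient pseudo-distance $d_{\mathcal S}([f],[g])=\inf_{\gamma}\norm{\Phi(f)-\Phi(g\circ\gamma)}_{L^2}$, and the equivariance $\Phi(g\circ\gamma)=\Phi(g)*\gamma=q_g*\gamma$ recorded above turns this into the claimed infimum. So the content lies entirely in the second equality, which I would prove in the squared form $\inf_{\gamma\in\DiffLip(M)}\norm{q_f-q_g*\gamma}^2=\WFR(\mu_{q_f},\mu_{q_g})^2$. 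Since the $*$-action is by $L^2$-isometries and $\mu_q(S^2)=\norm{q}_{L^2}^2$ whenever $q$ is nonzero a.e. (true for SRNFs of immersions), both sides have the shape $\norm{q_f}^2+\norm{q_g}^2-2\,(\text{``correlation''})$, and, using the explicit $\delta=\tfrac12$ form of $J$ on $S^2$ together with the fact that the weight $\overline{\cos}(d(u,v))$ equals $u\cdot v$ whenever $u\cdot v\ge 0$, everything reduces to matching the two correlation terms. I will prove the two inequalities.

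\textbf{$\WFR\le\inf$.} Fix $\gamma\in\DiffLip(M)$ and build a semi-coupling whose cost is at most $\norm{q_f-q_g*\gamma}^2$. Let $\{u_j\}$ and $\{v_k\}$ be the directions in $\operatorname{supp}\mu_{q_f}$ and $\operatorname{supp}\mu_{q_g}$ (finite, since $f,g$ are PL); on the measurable set $R_{jk}:=q_f^{*}\{u_j\}\cap\gamma^{-1}(q_g^{*}\{v_k\})$ the fields $q_f$ and $q_g*\gamma$ have constant directions $u_j$ and $v_k$. Put $a_{jk}:=\int_{R_{jk}}|q_f|^2\,dm$ and $b_{jk}:=\int_{R_{jk}}|q_g*\gamma|^2\,dm$, and define $\gamma_0,\gamma_1$ by placing the masses $a_{jk},b_{jk}$ at $(u_j,v_k)$ when $u_j\cdot v_k\ge0$ and at the disjoint locations $(u_j,u_j)$, resp. $(v_k,v_k)$, when $u_j\cdot v_k<0$. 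A short change-of-variables computation — using that $\gamma$ is orientation preserving and bi-Lipschitz, so that the area change is governed by $|D\gamma|$ and normals are not flipped — gives $(\operatorname{Proj}_0)_\#\gamma_0=\mu_{q_f}$ and $(\operatorname{Proj}_1)_\#\gamma_1=\mu_{q_g}$, hence $(\gamma_0,\gamma_1)\in\Gamma(\mu_{q_f},\mu_{q_g})$. Evaluating $J$ with reference measure $\gamma_0+\gamma_1$ atomwise yields $J(\gamma_0,\gamma_1)=\norm{q_f}^2+\norm{q_g}^2-2\sum_{u_j\cdot v_k\ge0}\sqrt{a_{jk}b_{jk}}\,(u_j\cdot v_k)$, while splitting $\norm{q_f-q_g*\gamma}^2=\sum_{j,k}\int_{R_{jk}}|q_f-q_g*\gamma|^2\,dm$ over the $R_{jk}$ and applying Cauchy--Schwarz on each piece shows that the $R_{jk}$-contribution to $\norm{q_f-q_g*\gamma}^2$ dominates its contribution to $J(\gamma_0,\gamma_1)$ in all cases. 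Summing, $\norm{q_f-q_g*\gamma}^2\ge J(\gamma_0,\gamma_1)\ge\WFR(\mu_{q_f},\mu_{q_g})^2$, and since $\gamma$ is arbitrary this direction follows.

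\textbf{$\inf\le\WFR$.} Conversely, given $\varepsilon>0$ take a semi-coupling $(\gamma_0,\gamma_1)\in\Gamma(\mu_{q_f},\mu_{q_g})$ with $J(\gamma_0,\gamma_1)<\WFR(\mu_{q_f},\mu_{q_g})^2+\varepsilon$. Since $\mu_{q_f},\mu_{q_g}$ are finitely supported the WFR problem between them is a finite-dimensional convex program, so one may take $(\gamma_0,\gamma_1)$ finitely supported with masses $c^0_{jk},c^1_{jk}$ on the atoms $(u_j,v_k)$, with $\sum_k c^0_{jk}=\mu_{q_f}(\{u_j\})$, $\sum_j c^1_{jk}=\mu_{q_g}(\{v_k\})$, and (by near-optimality) $c^0_{jk}c^1_{jk}=0$ whenever $u_j\cdot v_k<0$; then $J(\gamma_0,\gamma_1)=\norm{q_f}^2+\norm{q_g}^2-2\sum_{j,k}\sqrt{c^0_{jk}c^1_{jk}}\,(u_j\cdot v_k)$. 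I would realize this by \emph{two} reparametrizations. Choose positive numbers $m_{jk}$ (one per index pair with $c^0_{jk}+c^1_{jk}>0$) summing to $m(M)$, partition $M$ into measurable cells $C_{jk}$ with $m(C_{jk})=m_{jk}$, subdivide $q_f^{*}\{u_j\}$ into pieces $T_{jk}$ with $\int_{T_{jk}}|q_f|^2\,dm=c^0_{jk}$ and $q_g^{*}\{v_k\}$ into pieces $S_{jk}$ with $\int_{S_{jk}}|q_g|^2\,dm=c^1_{jk}$ (both consistent by the marginal conditions), and pick bi-Lipschitz $\phi,\psi\in\DiffLip(M)$ with $\phi(C_{jk})=T_{jk}$, $\psi(C_{jk})=S_{jk}$ whose Jacobians are chosen so that $q_f*\phi$ and $q_g*\psi$ are constant on each $C_{jk}$. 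Then $q_f*\phi\equiv\sqrt{c^0_{jk}/m(C_{jk})}\,u_j$ and $q_g*\psi\equiv\sqrt{c^1_{jk}/m(C_{jk})}\,v_k$ on $C_{jk}$, so $\norm{q_f*\phi-q_g*\psi}^2=\sum_{j,k}\big(c^0_{jk}+c^1_{jk}-2\sqrt{c^0_{jk}c^1_{jk}}\,(u_j\cdot v_k)\big)=J(\gamma_0,\gamma_1)$; by the isometry property $\norm{q_f*\phi-q_g*\psi}=\norm{q_f-q_g*(\psi\circ\phi^{-1})}$, whence $\inf_\gamma\norm{q_f-q_g*\gamma}^2\le J(\gamma_0,\gamma_1)<\WFR(\mu_{q_f},\mu_{q_g})^2+\varepsilon$, and $\varepsilon\to0$ completes the second equality.

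\textbf{Main obstacle.} The delicate step is the existence of the reparametrizations $\phi,\psi$ in the last paragraph (and similarly the constancy-on-cells there): a single measurable partition $\{C_{jk}\}$ of $M$ must be carried simultaneously onto prescribed sub-partitions of the two triangulations, with Jacobians of the prescribed type on each cell. Since $M$ is a surface and all area constraints are globally consistent, I expect this to follow from a cutting-and-rearranging argument — refine everything to a common combinatorial model of $M$ by topological disks (harmless, as subdividing an atom of $\gamma_0$ or $\gamma_1$ does not alter the marginals), realize the maps disk by disk using that any two Lipschitz disks of prescribed areas admit a bi-Lipschitz map of prescribed (locally constant) Jacobian, and glue along the common skeleton. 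The genuinely unbalanced cells, where one of $c^0_{jk},c^1_{jk}$ vanishes and so forces a vanishing target area incompatible with bi-Lipschitzness, would be handled by mapping such a cell onto a region of arbitrarily small surface-area measure and absorbing the resulting $O(\varepsilon)$ discrepancy; letting the bi-Lipschitz constants degrade as $\varepsilon\to0$ is harmless. Organising this gluing, and in particular matching the combinatorics of the three partitions, is where I expect the real work to lie; the marginal identities, the atomwise evaluation of $J$, and the Cauchy--Schwarz estimates are routine.
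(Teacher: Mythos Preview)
This theorem is quoted from \cite{bauer2022SRNF} rather than proved in the present paper; the nearest in-paper proof is that of Lemma~\ref{lem:cone_iso_dense} (the piecewise-constant case for general cones), and your proposal matches its strategy closely. Your $\WFR\le\inf$ argument---partition $M$ into the joint level sets $R_{jk}$ of the two direction maps, form a discrete semi-coupling from the cell masses, and bound the cross term by Cauchy--Schwarz on each cell---is exactly the content of Claim~\ref{maintheoremclaim3} there (your relocation of mass to the diagonal for pairs with $u_j\cdot v_k<0$ is a correct but inessential refinement, since the truncated cosine $\overline{\cos}$ already annihilates those contributions). For $\inf\le\WFR$ you propose two reparametrizations $\phi,\psi$ carrying a common cell structure $\{C_{jk}\}$ onto prescribed sub-partitions of the two face decompositions, then set $\gamma=\psi\circ\phi^{-1}$. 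The paper instead builds a single $\gamma$ directly via a shrink-and-extend device (Claim~\ref{ApproxHom}): subdivide each face $\sigma_i$ into sub-simplices $\sigma_{ij}$ of the areas dictated by the semi-coupling, shrink each $\sigma_{ij}$ (and likewise each $\tau_{ij}$) by a factor $r<1$ to a sub-simplex $\tilde\sigma_{ij}$ compactly contained in its interior, send $\tilde\sigma_{ij}\to\tilde\tau_{ij}$ by a constant-Jacobian PL map, extend arbitrarily over the complement of total area $(1-r)m(M)$, and let $r\to1$. This device resolves in one stroke exactly the obstacle you isolate at the end: the arbitrary extension absorbs all the combinatorial gluing as well as the degenerate cells with $c^0_{jk}c^1_{jk}=0$, and its contribution to the cross term is $O(\sqrt{1-r})$ by Cauchy--Schwarz. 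Your two-map route would also work, but it requires organising three partitions of $M$ simultaneously and controlling the global bi-Lipschitz extension; the paper's one-map shrinking argument is shorter and sidesteps that bookkeeping.
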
 
\noindent In the next section we will  extend this result of  to all Lipschitz immersions (Borel-measures, resp.).

\section{The SRNF distance}
For the goal of extending the result of Theorem~\ref{thm:maintheorem_old} to all Lipschitz surfaces, we will consider specifically $\delta=\frac{1}{2}$ in the definition of the WFR metric. 
\subsection{Equivalence of the WFR and SRNF distances}
Our main result of this section is the following theorem, which is slightly stronger than the desired equivalence result. 
\begin{theorem}\label{thm:maintheorem}
Given $q_1,q_2\in L^2(M,\R^3)$,
\[\inf_{\gamma\in \Gamma(M)}\|q_1-q_2*\gamma\|_{L^2}=\WFR(\mu_{q_1},\mu_{q_2}).\]
In particular, given 
$f,g\in W^{1,\infty}(M,\R^3)$ we can calculate their SRNF distance as an unbalanced OMT problem via
\begin{equation*}
    d_{\mathcal S}([f],[g])=\WFR(\mu_{q_f},\mu_{q_g}),
\end{equation*}
where $q_f$ and $q_g$ are the SRNFs of $f$ and $g$ respectively.
\end{theorem}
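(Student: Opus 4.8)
The plan is to prove the displayed identity $\inf_{\gamma\in\Gamma(M)}\|q_1-q_2*\gamma\|_{L^2}=\WFR(\mu_{q_1},\mu_{q_2})$ as two inequalities, and then read off the SRNF statement formally. For the inequality $\WFR(\mu_{q_1},\mu_{q_2})\le\inf_\gamma\|q_1-q_2*\gamma\|_{L^2}$ I would produce, for each fixed $\gamma\in\Gamma(M)$, an explicit semi-coupling $(\eta_0,\eta_1)\in\Gamma(\mu_{q_1},\mu_{q_2})$ with $J(\eta_0,\eta_1)\le\|q_1-q_2*\gamma\|_{L^2}^2$. Split $M$ into $W=\{q_1\ne0,\ q_2\circ\gamma\ne0\}$, $W_1=\{q_1\ne0,\ q_2\circ\gamma=0\}$ and $W_2=\{q_1=0,\ q_2\circ\gamma\ne0\}$ (the rest being irrelevant); set $\Theta(x)=(q_1(x)/|q_1(x)|,\ q_2(\gamma(x))/|q_2(\gamma(x))|)$ on $W$ and let $\eta_0$ be $\Theta_\#(|q_1|^2 m|_W)$ together with the pushforward of $|q_1|^2 m|_{W_1}$ by $x\mapsto(q_1(x)/|q_1(x)|,e_0)$, and $\eta_1$ be $\Theta_\#(|q_2\circ\gamma|^2|D\gamma|\,m|_W)$ together with the pushforward of $|q_2\circ\gamma|^2|D\gamma|\,m|_{W_2}$ by $x\mapsto(e_0,q_2(\gamma(x))/|q_2(\gamma(x))|)$, for an arbitrary $e_0\in S^2$. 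The bi-Lipschitz change of variables $y=\gamma(x)$, under which $|D\gamma|\,dm$ pushes forward to $dm$, gives $(\operatorname{Proj}_0)_\#\eta_0=\mu_{q_1}$ and $(\operatorname{Proj}_1)_\#\eta_1=\mu_{q_2}$. In $J(\eta_0,\eta_1)$ the mass terms contribute exactly $\|q_1\|^2+\|q_2\|^2$, while the cross term is bounded below using that the geometric-mean density $\frac{\sqrt{\eta_0\eta_1}}{\eta}$ dominates, by Cauchy--Schwarz along the fibers of $\Theta$, the conditional expectation of $|q_1|\,|q_2\circ\gamma|\sqrt{|D\gamma|}$; since $\overline{\cos}(d(u,v))=(u\cdot v)^+\ge 0$ on $S^2$ the fibrewise inequality survives integration, yielding $J(\eta_0,\eta_1)\le\|q_1\|^2+\|q_2\|^2-2\int_M\sqrt{|D\gamma|}\,q_1\cdot(q_2\circ\gamma)\,dm=\|q_1-q_2*\gamma\|_{L^2}^2$. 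Taking the infimum over $\gamma$ gives the inequality.

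For the reverse inequality I would first reduce to simple functions. The inequality just proved implies $\WFR(\mu_q,\mu_{q'})\le\|q-q'\|_{L^2}$ (take $\gamma=\operatorname{id}$), so by the triangle inequality for $\WFR$ and the fact that each $*\gamma$ is an $L^2$-isometry, both sides of the asserted identity are $1$-Lipschitz in $(q_1,q_2)$ with respect to $\|\cdot\|_{L^2}$. Hence it suffices to prove $\inf_\gamma\|q_1-q_2*\gamma\|_{L^2}\le\WFR(\mu_{q_1},\mu_{q_2})$ when $q_1=\sum_i c_i\mathbf 1_{A_i}$ and $q_2=\sum_j d_j\mathbf 1_{B_j}$ are constant on finite partitions $\{A_i\},\{B_j\}$ of $M$ into coordinate cubes (in a finite atlas); such functions are $L^2$-dense, and for them $\mu_{q_1},\mu_{q_2}$ are finitely supported. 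As in the discrete analysis of~\cite{bauer2022SRNF}, for every $\varepsilon>0$ there is a finitely supported $\varepsilon$-optimal semi-coupling of the form: transport masses $a_{kl},b_{kl}\ge0$ between aligned directions $u_k\cdot v_l>0$, together with destruction masses at the $u_k$ and creation masses at the $v_l$, with $J(\eta_0,\eta_1)=\sum_{u_k\cdot v_l>0}(a_{kl}+b_{kl}-2\sqrt{a_{kl}b_{kl}}\,u_k\cdot v_l)+\sum(\text{destr.})+\sum(\text{creat.})\le\WFR(\mu_{q_1},\mu_{q_2})^2+\varepsilon$.

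The main obstacle is realizing such a discrete plan by an element of $\Gamma(M)$. I would first distribute the transport masses over the finer index pairs $(i,j)$ by a \emph{coupled} proportional rule $a_{ij}=t_{ij}a_{k(i)l(j)}$, $b_{ij}=t_{ij}b_{k(i)l(j)}$ with $t_{ij}\ge0$, $\sum_{(i,j)\mapsto(k,l)}t_{ij}=1$, chosen compatibly with the marginals $\sum_j a_{ij}=|c_i|^2m(A_i)$, $\sum_i b_{ij}=|d_j|^2m(B_j)$ — a nonemptiness statement for transportation polytopes. Writing $p_{ij}=a_{ij}/|c_i|^2$, $q_{ij}=b_{ij}/|d_j|^2$, $\lambda_{ij}=q_{ij}/p_{ij}$, the heart of the argument is the following lemma (which I expect to be where most of the work lies): for any two finite measurable partitions $\{C_\alpha\}$, $\{D_\alpha\}$ of $M$ over the same index set with $m(C_\alpha),m(D_\alpha)>0$, and any $\varepsilon>0$, there is $\gamma\in\Gamma(M)$ and measurable $C'_\alpha\subseteq C_\alpha$, $D'_\alpha\subseteq D_\alpha$ with $m(C_\alpha\setminus C'_\alpha),m(D_\alpha\setminus D'_\alpha)<\varepsilon$ such that $\gamma(C'_\alpha)=D'_\alpha$ and $|D\gamma|$ lies within $(1\pm\varepsilon)\,m(D_\alpha)/m(C_\alpha)$ on $C'_\alpha$. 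This I would prove by approximating the $C_\alpha,D_\alpha$ from inside by finite unions of cubes in charts (where boundaries are null), tiling these by many small cubes of a common side chosen so that the relevant volume ratios equal the prescribed $\lambda$'s, pairing source and target cubes bijectively within each class, and — the genuinely delicate step — assembling the piecewise affine pieces into a global bi-Lipschitz \emph{homeomorphism} of $M$; for the assembly I would either traverse the cubes along space-filling curves and map snakes to snakes, or first install the Jacobian profile by a Dacorogna--Moser-type bi-Lipschitz map and then rearrange via a measure-preserving bi-Lipschitz map, using density of the latter in the group of measure-preserving automorphisms. Applying this with $\{C_\alpha\}$ a refinement of $\{A_i\}$ having $m(A_i^{(j)})=p_{ij}$ and $\{D_\alpha\}$ a refinement of $\{B_j\}$ having $m(B_j^{(i)})=q_{ij}$, and handling the destruction/creation masses by mapping onto, resp. from, regions of tiny Lebesgue measure (permissible in $\Gamma(M)$ at the price of a large but finite bi-Lipschitz constant), a direct computation using $p_{ij}|c_i|^2=a_{ij}$, $p_{ij}\lambda_{ij}|d_j|^2=b_{ij}$ and the identity $\sum_{(i,j)\mapsto(k,l)}\sqrt{a_{ij}b_{ij}}=\sqrt{a_{kl}b_{kl}}$ gives
\[\|q_1-q_2*\gamma\|_{L^2}^2\le\sum_{i,j}\Big(a_{ij}+b_{ij}-2\sqrt{a_{ij}b_{ij}}\,\frac{c_i\cdot d_j}{|c_i|\,|d_j|}\Big)+C\varepsilon= J(\eta_0,\eta_1)+C\varepsilon\le\WFR(\mu_{q_1},\mu_{q_2})^2+(C+1)\varepsilon,\]
and letting $\varepsilon\to 0$ completes the reverse inequality; together with the first part this proves the asserted equality for all $q_1,q_2\in L^2(M,\R^3)$.

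Finally, for the SRNF statement, given $f,g\in W^{1,\infty}(M,\R^3)$ we have $q_f,q_g\in L^2(M,\R^3)$, and since $\Phi(g\circ\gamma)=\Phi(g)*\gamma$ for $\gamma\in\Gamma(M)$,
\[d_{\mathcal S}([f],[g])=\inf_{\gamma\in\Gamma(M)}\|\Phi(f)-\Phi(g\circ\gamma)\|_{L^2}=\inf_{\gamma\in\Gamma(M)}\|q_f-q_g*\gamma\|_{L^2}=\WFR(\mu_{q_f},\mu_{q_g})\]
by the identity just established.
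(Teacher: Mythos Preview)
Your strategy coincides with the paper's at the top level: prove the $1$-Lipschitz bound $\WFR(\mu_{q_1},\mu_{q_2})\le\|q_1-q_2\|_{L^2}$ by producing an explicit semi-coupling (this is Lemma~\ref{lem:continuity}), then reduce the full equality to the piecewise-constant case by density and the triangle inequality on both sides. The difference is in how the piecewise-constant case is handled. The paper simply invokes Theorem~\ref{thm:maintheorem_old} (proved in~\cite{bauer2022SRNF}) as a black box, so its proof of Theorem~\ref{thm:maintheorem} is only the short sandwich argument; you instead re-derive the PL equality from scratch by constructing bi-Lipschitz homeomorphisms that approximately realize a given discrete semi-coupling. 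That construction is essentially the content of~\cite{bauer2022SRNF} and is redone in the present paper, in the general cone setting, as Lemma~\ref{lem:cone_iso_dense}, where it is carried out via simplicial subdivision---shrink inner sub-simplices, map them affinely with constant Jacobian, then extend over the connected complement---rather than via your cube-tiling plus Dacorogna--Moser or snake-pairing route. Two small simplifications are available to you: for the easy inequality it suffices to treat $\gamma=\operatorname{id}$ and then invoke $\mu_{q*\gamma}=\mu_q$, rather than building a separate semi-coupling for each $\gamma$; and your ``coupled proportional rule'' and transportation-polytope step are avoidable by indexing the discrete semi-coupling directly by the pieces $(A_i,B_j)$ (allowing repeated directions), since $\WFR$ depends only on the underlying measures. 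Your route is correct and self-contained; the paper's is shorter because it outsources the PL construction.
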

\begin{remark}
Note, that as a direct consequence of Theorem \ref{thm:maintheorem} we can also conclude the extension of Theorem \ref{thm:maintheorem_old} to the original setup of the SRNF distance, the space of all smooth surfaces. 
\end{remark}

The proof of Theorem~\ref{thm:maintheorem} relies on a series of technical lemmas, which we will show next.
\begin{lemma}\label{lem:pf-RN}
Let $X,Y$ be topological spaces and $\rho: X\to Y$ be a measurable function with respect to the Borel $\sigma$-algebras. If $\mu,\mu_1\in\mathcal{M}(X)$, $\gamma,\gamma_1\in \mathcal{M}(Y)$ such that $\mu_1\ll\mu$, $\gamma=\rho_*\mu$, and $\gamma_1=\rho_*\mu_1$, then $\gamma_1\ll\gamma$. Furthermore,
$\frac{\mu_1}{\mu}=\frac{\gamma_1}{\gamma}\circ\rho$  almost everywhere.
\end{lemma}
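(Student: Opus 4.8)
The plan is to settle the absolute continuity statement directly from the definition of the pushforward, and then to identify the Radon--Nikodym derivative by testing both sides against $\rho$-preimages. For absolute continuity, let $A\subseteq Y$ be Borel with $\gamma(A)=0$; then $\mu(\rho^{-1}(A))=(\rho_*\mu)(A)=\gamma(A)=0$, so $\mu_1\ll\mu$ forces $\mu_1(\rho^{-1}(A))=0$, whence $\gamma_1(A)=(\rho_*\mu_1)(A)=\mu_1(\rho^{-1}(A))=0$. Thus $\gamma_1\ll\gamma$, and since all four measures are finite the densities $g:=\frac{d\mu_1}{d\mu}\in L^1(\mu)$ and $h:=\frac{d\gamma_1}{d\gamma}\in L^1(\gamma)$ exist by Radon--Nikodym.

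For the density identity, the key computation is that for every Borel $A\subseteq Y$, using in turn the defining property of $g$, the identity $\gamma_1=\rho_*\mu_1$, the defining property of $h$, the identity $\gamma=\rho_*\mu$, and the change-of-variables formula $\int_Y\varphi\,d(\rho_*\mu)=\int_X(\varphi\circ\rho)\,d\mu$ for nonnegative Borel $\varphi$, one obtains
\[
\int_{\rho^{-1}(A)}g\,d\mu=\mu_1\bigl(\rho^{-1}(A)\bigr)=\gamma_1(A)=\int_A h\,d\gamma=\int_A h\,d(\rho_*\mu)=\int_{\rho^{-1}(A)}(h\circ\rho)\,d\mu.
\]
Since $\mathcal F:=\{\rho^{-1}(A):A\in\mathcal B(Y)\}$ is a sub-$\sigma$-algebra of $\mathcal B(X)$ (preimages commute with complements and countable unions), this identity says precisely that the two finite measures $g\,\mu$ and $(h\circ\rho)\,\mu$ agree on $(X,\mathcal F)$; equivalently, $h\circ\rho$ is a version of the conditional expectation $E_\mu[g\mid\mathcal F]$.

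The step I expect to be the crux is upgrading this to the pointwise statement $g=h\circ\rho$ $\mu$-a.e.\ on all of $X$: the computation above only pins $g$ down through its $\mu$-averages over the fibres of $\rho$, so the conclusion genuinely requires $\frac{d\mu_1}{d\mu}$ to be ($\mu$-a.e.\ equal to) an $\mathcal F$-measurable function --- which holds, for instance, when $\rho$ is injective up to a $\mu$-null set, or, as in the applications of this lemma, when $g$ is a function of $\rho$ by construction. Granting that $g$ is $\mathcal F$-measurable, the uniqueness clause of the Radon--Nikodym theorem applied on $(X,\mathcal F,\mu|_{\mathcal F})$ turns the displayed identity into $g=h\circ\rho$ $\mu$-a.e., which is the assertion; absent that hypothesis one still obtains the always valid conditional form $E_\mu[\frac{d\mu_1}{d\mu}\mid\mathcal F]=\frac{d\gamma_1}{d\gamma}\circ\rho$.
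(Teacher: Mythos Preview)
Your argument follows the paper's proof essentially verbatim: the absolute-continuity step and the chain of integral identities over sets of the form $\rho^{-1}(A)$ are exactly what the paper writes. Where you diverge is in the final inference, and you are right to hesitate. The paper simply asserts ``Thus, $\frac{\mu_1}{\mu}=\frac{\gamma_1}{\gamma}\circ\rho$ almost everywhere'' from the equality of integrals over $\rho$-preimages, and that step is not valid in the stated generality.

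Indeed the lemma as stated is false. Take $X=[0,1]$ with Lebesgue measure $\mu$, let $Y$ be a single point, let $\rho$ be the constant map, and let $\mu_1$ have density $2x$ with respect to $\mu$. Then $\gamma=\gamma_1$ are both the unit point mass, so $\frac{\gamma_1}{\gamma}\circ\rho\equiv 1$, while $\frac{\mu_1}{\mu}=2x$. Your diagnosis is precisely correct: the displayed identity only pins down $E_\mu\bigl[\tfrac{d\mu_1}{d\mu}\,\big|\,\sigma(\rho)\bigr]=\tfrac{d\gamma_1}{d\gamma}\circ\rho$, and upgrading to a pointwise equality requires $\tfrac{d\mu_1}{d\mu}$ to be (a.e.\ equal to) a $\sigma(\rho)$-measurable function. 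One caution about your parenthetical, though: in the paper's actual application the density is $|q_i|^2$ while $\rho=(\overline{q_1},\overline{q_2})$, and $|q_i|^2$ need not be a function of the pair of unit-vector maps (take both $\overline{q_i}$ constant but $|q_1|$ nonconstant), so the extra hypothesis you isolate is not automatically satisfied there either.
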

\begin{proof}
Let $U\subseteq Y$ open such that $\gamma(U)=0$. By definition, $\mu(\rho^{-1}(U))=0$. Since $\mu_1\ll\mu$, $\mu_1(\rho^{-1}(U))=0$. Therefore, $\gamma_1(U)=0$. This proves $\gamma_1\ll\gamma$.
\bigskip\\
Following the definitions of the Radon-Nikodym derivatives, pushforwards, and the change of variables formula, we obtain
\begin{align*}
    \int_{\rho^{-1}(U)}\frac{\mu_1}{\mu}d\mu=\int_{\rho^{-1}(U)}d\mu_1=\int_{U}d\gamma_1=\int_{U}\frac{\gamma_1}{\gamma}d\gamma=\int_{\rho^{-1}(U)}\frac{\gamma_1}{\gamma}\circ\rho \,d\mu.
\end{align*}
Thus, $\frac{\mu_1}{\mu}=\frac{\gamma_1}{\gamma}\circ\rho$  almost everywhere.
\end{proof}
Given  $q\in L^2(M,\R^3)$ we can define a function from $M$ to $S^2$ that takes every point $x\in M$ to the unit vector in the direction of $q(x)$. As a matter of defining this function on every point, we can canonically choose the north pole of $S^2$ for points where $q(x)=0$.
\begin{definition}
For $q\in L^2(M,\R^3)$ we define the unit vector map of $q$ as
\begin{align*}
\overline{q}:M&\to S^2 \text{ given by }\\
x&\mapsto \begin{cases}\frac{q(x)}{|q(x)|}& \text{ if } q(x)\neq 0\\(1,0,0)& \text{otherwise}\end{cases}.
\end{align*}
\end{definition}
Note that since $q\in L^2(M,\R^3)$, it follows that  $\overline{q}:M\to S^2$ is measurable.
Let $q\in L^2(M,\R^3)$. We can define a measure, $\nu_q\in \mathcal{M}(M)$, via  \[\nu_q(U)=\int_U |q(x)|^2 dm.\]
for all open $U\subseteq M$.
Note that $\nu_q\ll m$ and $\frac{\nu_q}{m}=|q|^2$. Further, we can equivalently define $\mu_q$ as the pushforward of $\nu_q$ via $\overline{q}$.
\begin{lemma}
Let $q\in L^2(M,\R^3)$ and $\mu_q\in \mathcal{M}(S^2)$ be the measure associated with $q$. Then $\mu_q=\overline{q}_*\nu_q$.
\end{lemma}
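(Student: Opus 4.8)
The plan is to prove the equality by showing that the two measures agree on every open subset $U\subseteq S^2$ and then invoking the standard uniqueness statement for finite Borel measures. Fix an open set $U\subseteq S^2$. By the definition of the pushforward, $(\overline{q}_*\nu_q)(U)=\nu_q(\overline{q}^{\,-1}(U))$, so it suffices to identify $\overline{q}^{\,-1}(U)$ and compare it with the set $q^*U$ that appears in the definition of $\mu_q$.

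First I would unwind the definition of $\overline q$. On the measurable set $\{x\in M:q(x)\neq 0\}$ we have $\overline q(x)=q(x)/|q(x)|$, hence $\overline{q}^{\,-1}(U)\cap\{q\neq 0\}=q^*U$ by the very definition of $q^*U$. On the complementary set $Z:=\{x\in M:q(x)=0\}$ the map $\overline q$ is the constant $(1,0,0)$, so $\overline{q}^{\,-1}(U)\cap Z$ equals $Z$ if $(1,0,0)\in U$ and is empty otherwise. In either case $\overline{q}^{\,-1}(U)=q^*U\cup N_U$ with $N_U\subseteq Z$, and therefore $\nu_q(N_U)=\int_{N_U}|q(x)|^2\,dm=0$ because $q$ vanishes identically on $Z$. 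Consequently
\[(\overline{q}_*\nu_q)(U)=\nu_q\big(\overline{q}^{\,-1}(U)\big)=\nu_q(q^*U)=\int_{q^*U}|q(x)|^2\,dm=\mu_q(U),\]
the last equality being exactly the definition of $\mu_q$.

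Finally, since the open subsets of $S^2$ form a $\pi$-system generating the Borel $\sigma$-algebra, and since both $\mu_q$ and $\overline{q}_*\nu_q$ are finite measures of total mass $\int_M|q(x)|^2\,dm=\|q\|_{L^2}^2<\infty$, agreement on all open sets forces agreement on all Borel sets, which yields $\mu_q=\overline{q}_*\nu_q$. There is no genuine obstacle here; the only step requiring a little care is the bookkeeping at the exceptional set $Z=\{q=0\}$, where $\overline q$ is pinned to the arbitrary value $(1,0,0)$, but since $Z$ is $\nu_q$-null this choice is immaterial and the identification above goes through.
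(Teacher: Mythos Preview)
Your proof is correct and follows essentially the same approach as the paper: both arguments compute $(\overline{q}_*\nu_q)(U)$ for open $U\subseteq S^2$ by identifying $\overline{q}^{\,-1}(U)$ with $q^*U$ up to a subset of $\{q=0\}$, which is $\nu_q$-null. Your version is slightly more streamlined in that you handle the two cases ($(1,0,0)\in U$ or not) uniformly via the set $N_U$, and you make explicit the $\pi$-system step extending agreement on open sets to all Borel sets, which the paper leaves implicit.
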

\begin{proof}
Let $U\subseteq S^2$ open and define $M_0=\{x\in M|q(x)=0\}$. 

\noindent If $(1,0,0)\not\in S^2$, $\overline{q}^{-1}(U)=q^*(U)$ and thus
\[\overline{q}_*\nu_q(U)=\int_{\overline{q}^{-1}(U)} |q(x)|^2 dm=\int_{q^*(U)}|q(x)|^2dm=\mu_q.\]
If $(1,0,0)\in S^2$, $\overline{q}^{-1}(U)=q^*(U)\cup M_0$ and thus
\[\overline{q}_*\nu_q(U)=\int_{\overline{q}^{-1}(U)} |q(x)|^2 dm =\int_{q^*(U)}|q(x)|^2dm+\int_{M_0}|q(x)|^2 dm =\mu_q.\]
\end{proof}
Leveraging what we have proven above we may show a key continuity result that will then allow us to complete the proof of the main theorem.
\begin{lemma}\label{lem:continuity}
The map $(L^2(M,\R^3),\|\cdot\|_{L^2})\to (\mathcal{M}(S^2),\WFR)$ defined via $q\mapsto \mu_q$ given by Equation~\eqref{eq:psi_def} is Lipschitz continuous with Lipschitz constant $K=1$.
\end{lemma}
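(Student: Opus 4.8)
The plan is to prove the Lipschitz estimate $\WFR(\mu_{q_1},\mu_{q_2}) \le \|q_1-q_2\|_{L^2}$ for all $q_1,q_2 \in L^2(M,\R^3)$ by exhibiting an explicit semi-coupling between $\mu_{q_1}$ and $\mu_{q_2}$ whose $J$-functional is bounded by $\|q_1-q_2\|_{L^2}^2$. The natural candidate is the semi-coupling pushed forward from the ``diagonal-type'' coupling on $M$ induced by the joint unit-vector map. Concretely, let $\nu$ be any measure on $M$ dominating both $\nu_{q_1}$ and $\nu_{q_2}$ (e.g. $\nu = \nu_{q_1} + \nu_{q_2}$, or simply work with $m$ since $\nu_{q_i} \ll m$), consider the map $\Psi := (\overline{q_1},\overline{q_2}) : M \to S^2 \times S^2$, and set $\gamma_i := \Psi_*\nu_{q_i}$ for $i=0,1$ (with the obvious relabeling $q_0 := q_1$, $q_1 := q_2$). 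Then $(\operatorname{Proj}_0)_\#\gamma_0 = (\overline{q_1})_*\nu_{q_1} = \mu_{q_1}$ and $(\operatorname{Proj}_1)_\#\gamma_1 = (\overline{q_2})_*\nu_{q_2} = \mu_{q_2}$ by the previous lemma, so $(\gamma_0,\gamma_1) \in \Gamma(\mu_{q_1},\mu_{q_2})$.

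Next I would evaluate $J(\gamma_0,\gamma_1)$ using this semi-coupling. Take as reference measure $\gamma := \Psi_*\nu$ where $\nu = \nu_{q_1}+\nu_{q_2}$; then $\gamma_0,\gamma_1 \ll \gamma$. By Lemma~\ref{lem:pf-RN} applied to $\rho = \Psi$, we have $\frac{\gamma_0}{\gamma}\circ\Psi = \frac{\nu_{q_1}}{\nu}$ and $\frac{\gamma_1}{\gamma}\circ\Psi = \frac{\nu_{q_2}}{\nu}$ $\nu$-a.e. Using the explicit formula for $J = J_{1/2}$ on $S^2\times S^2$ from Equation~(2.1), namely that the integrand at $(u,v)$ is $\big|\sqrt{\tfrac{\gamma_0}{\gamma}}\,u - \sqrt{\tfrac{\gamma_1}{\gamma}}\,v\big|^2$, and pulling the integral back from $S^2\times S^2$ to $M$ along $\Psi$ via the change-of-variables formula for pushforwards, I get
\[
J(\gamma_0,\gamma_1) = \int_M \Big| \sqrt{\tfrac{\nu_{q_1}}{\nu}(x)}\,\overline{q_1}(x) - \sqrt{\tfrac{\nu_{q_2}}{\nu}(x)}\,\overline{q_2}(x) \Big|^2 d\nu(x).
\]
Since $\frac{\nu_{q_i}}{\nu} = \frac{|q_i|^2}{|q_1|^2+|q_2|^2}$ and $d\nu = (|q_1|^2+|q_2|^2)\,dm$, the integrand times $d\nu$ becomes $\big| |q_1(x)|\,\overline{q_1}(x) - |q_2(x)|\,\overline{q_2}(x)\big|^2 dm = |q_1(x) - q_2(x)|^2\,dm$ at every $x$ where at least one of $q_1,q_2$ is nonzero, and the set where both vanish contributes nothing. (Care is needed only in checking $|q_i(x)|\,\overline{q_i}(x) = q_i(x)$, which holds by the very definition of $\overline{q_i}$ when $q_i(x)\neq 0$ and is trivially $0=0$ otherwise; the potential mismatch of $\overline{q_i}$ at points where $q_i$ vanishes is harmless because it is multiplied by $|q_i(x)|=0$.) Hence $J(\gamma_0,\gamma_1) = \|q_1-q_2\|_{L^2}^2$, and therefore $\WFR(\mu_{q_1},\mu_{q_2}) = \inf_{\Gamma} \sqrt{J} \le \sqrt{J(\gamma_0,\gamma_1)} = \|q_1-q_2\|_{L^2}$, which is exactly the claimed Lipschitz continuity with constant $K=1$.

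The main obstacle, and the only genuinely delicate point, is the bookkeeping around the dominating measure and the Radon–Nikodym derivatives: one must make sure the semi-coupling is well defined (measurability of $\Psi$, which follows from measurability of each $\overline{q_i}$ noted above), that the choice of $\gamma$ dominating $\gamma_0$ and $\gamma_1$ is legitimate and that the value of $J$ is independent of it (as built into the definition), and that the pushforward change-of-variables identity is applied correctly so that the $S^2\times S^2$ integral collapses to the clean $L^2$ integral over $M$. Everything else is the routine algebraic simplification $\big||q_1|\overline{q_1}-|q_2|\overline{q_2}\big|^2 = |q_1-q_2|^2$ a.e. Once these measure-theoretic details are in place the estimate is immediate, and since $\WFR$ is a metric and $\|\cdot\|_{L^2}$ induces a metric, the Lipschitz bound on the map $q\mapsto\mu_q$ follows.
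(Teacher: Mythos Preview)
Your proof is correct and follows essentially the same approach as the paper: construct the semi-coupling by pushing forward $\nu_{q_1}$ and $\nu_{q_2}$ along the joint unit-vector map $(\overline{q_1},\overline{q_2}):M\to S^2\times S^2$, invoke Lemma~\ref{lem:pf-RN} to identify the Radon--Nikodym derivatives after pushforward, and then pull the $J$-integral back to $M$ to recover $\|q_1-q_2\|_{L^2}^2$. The only cosmetic difference is that the paper takes the dominating measure to be $\rho_* m$ while you use $\Psi_*(\nu_{q_1}+\nu_{q_2})$; as you yourself note, either choice works since $J$ is independent of the dominating measure.
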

\begin{proof}
     Let $q_1,q_2\in L^2(M,\R^3)$. For any semi-coupling $(\gamma_1,\gamma_2)\in\Gamma(\mu_{q_1},\mu_{q_2})$,\[\WFR(\mu_{q_1},\mu_{q_2})\leq \sqrt{J_\delta(\gamma_1,\gamma_2)}.\]
Thus, to prove the theorem we must construct $(\gamma_1,\gamma_2)\in \Gamma(\mu_{q_1},\mu_{q_2})$ such that $J_\delta(\gamma_1,\gamma_2)=\|q_1-q_2\|^2_{L_2}$. To construct such a semi-coupling  we first construct $\rho:M\to S^2\times S^2$ defined as unit vector maps of ${q_1}$ and ${q_2}$ on the first and second factor respectively. I.e. the map is given by $\rho(x)=\left(\overline{q_1}(x),\overline{q_2}(x)\right).$
Since $\overline{q_1}$ and $\overline{q_2}$ are individually measurable, then so is $\rho$. We can then define $\gamma_1,\gamma_2\in\mathcal{M}(S^2\times S^2)$ via $\gamma_1=\rho_*\nu_{q_1}$ and $\gamma_2=\rho_*\nu_{q_2}$. 
\begin{claim}
The pair of measures, $(\gamma_1,\gamma_2)$ is a semi-coupling from $\mu_{q_1}$ to $\mu_{q_2}$.
\end{claim}
\textit{Proof of claim. } Let $U\subseteq S^2$ be open. Thus,
\[\gamma_1(U\times S^2)= \nu_{q_1}\left(\rho^{-1}(U\times S^2)\right)= \nu_{q_1}\left(\overline{{q_1}}^{-1}(U)\cap \overline{q_2}^{-1}(S^2)\right)=\nu_{q_1}\left(\overline{{q_1}}^{-1}(U)\right)=\mu_{q_1}(U) \]and\[\gamma_2(S^2\times U)= \nu_{q_2}\left(\rho^{-1}(S^2\times U)\right)= \nu_{q_1}\left(\overline{{q_1}}^{-1}(S^2)\cap \overline{q_2}^{-1}(U)\right)=\nu_{q_1}\left(\overline{q_2}^{-1}(U)\right)=\mu_{q_2}(U).\]
So $(\gamma_1,\gamma_2)$ is a semi-coupling from $\mu_{q_1}$ to $\mu_{q_2}$.
\bigskip\\
Recall from the definition of the functional $J_\delta$ we need to construct $\gamma\in\mathcal{M}(S^2\times S^2)$ such that $\gamma_1,\gamma_2\ll\gamma$. Define $\gamma= \rho_*m$. We know $\mu_{q_1},\mu_{q_2}\ll m$. Thus, by Lemma \ref{lem:pf-RN}, $\gamma_1,\gamma_2\ll\gamma$. Furthermore,\[|{q_1}|^2=\frac{\mu_{q_1}}{m}=\frac{\gamma_1}{\gamma}\circ \rho\text{ a.e.}\qquad\text{ and }\qquad|q_2|^2=\frac{\mu_{q_2}}{m}=\frac{\gamma_2}{\gamma}\circ \rho \text{ a.e.}\]
So,
\begin{align*}
    J_\delta(\gamma_1,\gamma_2)=&\int_{S^2\times S^2}\left|\sqrt{\frac{\gamma_1}{\gamma}(u,v)}u-\sqrt{\frac{\gamma_1}{\gamma}(u,v)}v\right|^2 d\gamma(u,v)\\
    =&\int_{S^2\times S^2}\frac{\gamma_1}{\gamma}(u,v)d\gamma(u,v)+\int_{S^2\times S^2}\frac{\gamma_2}{\gamma}(u,v)d\gamma(u,v)\\
    &\qquad\qquad\qquad\qquad\qquad\qquad-2\int_{S^2\times S^2}\frac{\sqrt{\gamma_1\gamma_2}}{\gamma}(u,v) \langle u,v\rangle d\gamma(u,v)\\
    =&\int_{\rho^{-1}(S^2\times S^2)}\frac{\gamma_1}{\gamma}\circ \rho (x)\, dm +\int_{\rho^{-1}(S^2\times S^2)}\frac{\gamma_2}{\gamma}\circ \rho (x)\,dm\\
    &\qquad\qquad\qquad\qquad\qquad\qquad-2\int_{\rho^{-1}(S^2\times S^2)}\sqrt{\frac{\gamma_1}{\gamma}\circ\rho(x)}\sqrt{\frac{\gamma_2}{\gamma}\circ\rho(x)}\langle\rho(x)\rangle d\gamma(u,v)\\
    =&\int_{M}|{q_1}(x)|^2 dm+\int_{M} |q_2(x)|^2 dm-2\int_{M} |{q_1}(x)||q_2(x)| \left\langle \frac{{q_1}(x)}{|{q_1}(x)|},\frac{q_2(x)}{|q_2(x)|}\right\rangle dm\\
    =&\|{q_1}-q_2\|^2_{L^2}
\end{align*}
Thus, \[\WFR(\mu_{q_1},\mu_{q_2})\leq \sqrt{J_\delta(\gamma_1,\gamma_2)}= 1 \cdot \|q_1-q_2\|_{L^2}\]
\end{proof}
We are now ready to conclude the proof of Theorem \ref{thm:maintheorem}:
\begin{proof}[Proof of Theorem \ref{thm:maintheorem}]

Let $q_1,q_2\in L^2(M,\R^3)$ and let $\epsilon>0$. Let $p_1,p_2$ be piecewise constant functions such that $\|q_1-p_1\|_{L^2}<\epsilon/4$ and $\|q_2-p_2\|_{L^2}<\epsilon/4$.  Therefore,   \[\inf_{\gamma\in \Gamma(M)}\|q_1-p_1*\gamma\|_{L^2},\,\inf_{\gamma\in \Gamma(M)}\|q_2-p_2*\gamma\|_{L^2},\,  \WFR(\mu_{q_1},\mu_{p_1}),\,\WFR(\mu_{q_2},\mu_{p_2})<\epsilon/4.\]
Thus,
\begin{align*}
    \inf_{\gamma\in \Gamma(M)}\|q_1-q_2*\gamma\|_{L^2}&\leq \inf_{\gamma\in \Gamma(M)}\|q_1-p_1*\gamma\|_{L^2}+\inf_{\gamma\in \Gamma(M)}\|p_2-q_2*\gamma\|_{L^2}\\
    &\qquad\qquad+\inf_{\gamma\in\Gamma(M)}\|p_1-p_2*\gamma\|_{L^2}\\
    &\leq\epsilon/2+\inf_{\gamma\in\Gamma(M)}\|p_1-p_2*\gamma\|_{L^2}\\
    &=\epsilon/2+\WFR(\mu_{p_1},\mu_{p_2})\\
    &\leq \epsilon/2 + \WFR(\mu_{q_1},\mu_{p_1}) + \WFR(\mu_{p_2},\mu_{q_2})+ \WFR(\mu_{q_1},\mu_{q_2})\\
    &\leq \epsilon + \WFR(\mu_{q_1},\mu_{q_2})\\     
    \text{and}\qquad\qquad\qquad\qquad\qquad\\
\end{align*}
\begin{align*}    \WFR(\mu_{q_1},\mu_{q_2})
    &\leq\WFR(\mu_{p_1},\mu_{p_2})+\WFR(\mu_{q_1},\mu_{p_1})+\WFR(\mu_{p_2},\mu_{q_2})\\
    &\leq\WFR(\mu_{p_2},\mu_{q_2})+\epsilon/2\\
    &=\inf_{\gamma\in\Gamma(M)}\|p_1-p_2*\gamma\|_{L^2}+\epsilon/2\\
    &\leq  \inf_{\gamma\in \Gamma(M)}\|q_1-p_1*\gamma\|_{L^2}+\inf_{\gamma\in \Gamma(M)}\|p_2-q_2*\gamma\|_{L^2}\\
    &\qquad\qquad+\inf_{\gamma\in\Gamma(M)}\|q_1-q_2*\gamma\|_{L^2}+\epsilon/2\\
    &\leq \inf_{\gamma\in\Gamma(M)}\|q_1-q_2*\gamma\|_{L^2}+\epsilon.
\end{align*}
So,\begin{equation*}
    \WFR(\mu_{q_1},\mu_{q_2})-\epsilon\leq \inf_{\gamma\in\Gamma(M)}\|q_1-q_2*\gamma\|_{L^2} \leq \WFR(\mu_{q_1},\mu_{q_2})+\epsilon.
\end{equation*}
Taking $\epsilon\to 0$ we can conclude $\inf_{\gamma\in\Gamma(M)}\|q_1-q_2*\gamma\|_{L^2}=\WFR(\mu_{q_1},\mu_{q_2})$.
\end{proof}
\subsection{Characterizing the closure of the image of the SRNF map}
Our equivalence result will also allow us to characterize the (closure of the) image of the SRNF map $\Phi$ in the context of spherical surfaces:
\begin{theorem}\label{thm:image}
Let $f\in\Imm(S^2,\mathbb R^3)$ and let $q=\Phi(f)\in L^2(S^2,\R^3)$.
Then $q$ satisfies the closure condition $\int_{S^2}q(x)|q(x)|\,dm=0$.
Moreover, the closure of the image of $\Phi$ is  given by the set
    \[\mathcal{U}:=\left\{ q\in L^2(S^2,\R^3) \text{ such that } \int_{S^2}q(x)|q(x)|\,dm=0\right\}.\]
\end{theorem}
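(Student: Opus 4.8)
The plan is to prove the two assertions separately: first the closure (divergence) condition for images of actual immersions, then the characterization of $\overline{\operatorname{Im}\Phi}$.

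For the first claim, I would start from the classical \emph{Minkowski-type identity} for closed surfaces: if $f\in\Imm(S^2,\R^3)$ is a Lipschitz immersion of the closed surface $S^2$, then $\int_{S^2} a_f(x)\, n_f(x)\, dm = 0$, i.e. the vector-valued integral of the oriented area element vanishes. Indeed, writing this componentwise, $\int_{S^2} \langle n_f, e_i\rangle\, a_f\, dm$ is exactly the flux of the constant vector field $e_i$ through the closed immersed surface, which is zero by the divergence theorem (the constant field is divergence-free, and the surface is boundaryless). One has to be a little careful to justify this in the merely-Lipschitz ($W^{1,\infty}$) category rather than the smooth one — the cleanest route is to approximate $f$ by smooth immersions in $W^{1,\infty}$, or to invoke the version of the divergence theorem for Lipschitz maps directly; either way this is standard. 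Now observe that by the definitions in the paper, $q=\Phi(f)=q_f$ satisfies $q_f(x)|q_f(x)| = \sqrt{a_f(x)}\,n_f(x)\cdot\sqrt{a_f(x)}\,|n_f(x)| = a_f(x)\,n_f(x)$ (using $|n_f|=1$ a.e.). Hence $\int_{S^2} q(x)|q(x)|\,dm = \int_{S^2} a_f(x) n_f(x)\,dm = 0$, which is the closure condition. Also note that the map $q\mapsto \int_{S^2} q(x)|q(x)|\,dm$ is continuous on $L^2(S^2,\R^3)$ (since $\||q_1|q_1 - |q_2|q_2|\|_{L^1}\le C(\|q_1\|_{L^2}+\|q_2\|_{L^2})\|q_1-q_2\|_{L^2}$ by Cauchy–Schwarz), so $\mathcal U$ is a closed subset of $L^2$ containing $\operatorname{Im}\Phi$, which already gives $\overline{\operatorname{Im}\Phi}\subseteq\mathcal U$.

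For the reverse inclusion $\mathcal U\subseteq\overline{\operatorname{Im}\Phi}$, the strategy is to show that every $q\in\mathcal U$ can be approximated in $L^2$ by elements of $\operatorname{Im}\Phi$. Here I would exploit Theorem~\ref{thm:maintheorem} and the fact — already available from the discrete theory in \cite{bauer2022SRNF} — that piecewise-linear (triangulated) spherical surfaces realize a rich class of finitely supported measures: given $q\in\mathcal U$, first approximate it by a piecewise-constant $p$ with $\|q-p\|_{L^2}$ small; then the associated measure $\mu_p$ is finitely supported on $S^2$ and its "closure vector" $\int q|q| = \sum_i c_i u_i$ (with $u_i\in S^2$, $c_i>0$) is close to $0$. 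One then perturbs $p$ slightly so that its closure vector is \emph{exactly} zero while keeping it $L^2$-close to $q$ (adjusting the weights $c_i$ and/or splitting/adding a few directions — this is a finite-dimensional adjustment, and the condition "the positive combination of the $u_i$ sums to zero" is an open-type/codimension-3 condition that can be met by a small correction since the $u_i$ positively span $\R^3$ once there are enough of them). A piecewise-constant field $p$ on $S^2$ whose signed area vector vanishes, $\sum_i c_i u_i = 0$ with $c_i = $ (weight) $= $ (area of the $i$-th region)$\times$(length$^2$), is precisely the "Gauss image / normal data" of a closed polyhedral surface: by the Minkowski existence theorem (the polytope version — Minkowski's theorem on the existence of a convex polytope with prescribed face normals and face areas), there is a closed convex polyhedron, hence a PL immersion $f_p\in\Imm(S^2,\R^3)$, with $a_{f_p} n_{f_p}$ equal to this data up to parametrization; consequently $\Phi(f_p)$ equals $p$ up to the $\DiffLip$-action, and in particular $\mu_{\Phi(f_p)} = \mu_p$. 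Applying Lemma~\ref{lem:continuity} (or Theorem~\ref{thm:maintheorem}) then gives $\WFR(\mu_q,\mu_{\Phi(f_p)}) = \WFR(\mu_q,\mu_p)\le \|q-p\|_{L^2}$ small; combined with the identity $\inf_\gamma\|\Phi(f_p) - q*\gamma\| = \WFR(\mu_q,\mu_{\Phi(f_p)})$ from Theorem~\ref{thm:maintheorem}, one finds a reparametrization of $\Phi(f_p)$ within $L^2$-distance $\le \|q-p\| + (\text{correction})$ of $q$; since reparametrizations of $\Phi(f_p)$ are themselves of the form $\Phi(f_p\circ\gamma)\in\operatorname{Im}\Phi$, this exhibits elements of $\operatorname{Im}\Phi$ arbitrarily close to $q$.

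I expect the main obstacle to be the reverse inclusion, and specifically two points within it: (i) making the finite-dimensional perturbation $p\rightsquigarrow p'$ with $\int q'|q'| = 0$ precise while controlling the $L^2$ error — one must ensure the approximating piecewise-constant field has "enough independent directions" that zero lies in the interior of the positive cone they span, which is why passing through a fine triangulation helps; and (ii) correctly invoking Minkowski's existence theorem to turn the measure-theoretic/normal data into an honest closed PL surface parametrized by $S^2$, and checking that its SRNF is the prescribed $p'$ up to $\DiffLip$ — this is essentially the content used in \cite{bauer2022SRNF} but should be stated carefully. The continuity direction ($\overline{\operatorname{Im}\Phi}\subseteq\mathcal U$) and the divergence-theorem computation are routine by comparison.
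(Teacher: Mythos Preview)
Your proposal is correct and follows essentially the same route as the paper: the divergence theorem gives the closure condition (and hence $\overline{\operatorname{Im}\Phi}\subseteq\mathcal U$ via the continuity you note), while the reverse inclusion comes from approximating by a finitely supported zero-barycenter measure, invoking Minkowski's existence theorem to realize it as a convex polytope, and applying Theorem~\ref{thm:maintheorem}. For your anticipated obstacle~(i), the paper's device is simply to add mass $\epsilon/18$ at each of $\pm e_1,\pm e_2,\pm e_3$ to the finitely supported approximant---this preserves the barycenter, forces the support off any great circle, and costs at most $\epsilon/3$ in $\operatorname{WFR}$.
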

To prove this result we will need a classical theorem from geometric measure theory and the study of convex polyhedra, which we will recall next:
\begin{theorem}[Minkowski's Theorem \cite{alexandrov1938theorie,Minkowski1897,schneider1993convex}]\label{mink}
Let $\mu\in \mathcal{M}(S^2)$ such that the support of $\mu$ is not concentrated on a great circle and \begin{equation*}
    \int_{S^2}x\,d\mu(x)=0.
\end{equation*}
Then, there exists a unique (up to translation) convex body whose surface area measure is $\mu$. Moreover, if $\mu$ is finitely supported then the convex body is a polytope.
\end{theorem}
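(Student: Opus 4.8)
The plan is to split the statement into existence and uniqueness: existence is proved first for finitely supported $\mu$ via a constrained variational problem over polytopes, and the general case follows by weak-$\ast$ approximation together with a compactness argument; uniqueness is an independent consequence of the Minkowski inequality from Brunn--Minkowski theory. \textbf{Existence in the finitely supported case.} Write $\mu=\sum_{i=1}^{N}a_i\delta_{u_i}$ with $a_i>0$ and distinct $u_i\in S^2$. The hypothesis that $\mathrm{supp}\,\mu$ is not contained in a great circle means exactly that $u_1,\dots,u_N$ positively span $\R^3$, so for every $h=(h_1,\dots,h_N)\in\R^N$ the polyhedron $P(h)=\{x\in\R^3:\langle x,u_i\rangle\le h_i\text{ for all }i\}$ is bounded; and $\sum_i a_iu_i=0$ makes the linear functional $L(h)=\sum_i a_ih_i$ invariant under the substitutions $h\mapsto h+(\langle t,u_i\rangle)_i$ that correspond to translating $P(h)$ by $t$. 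I would minimize $L$ over the closed set $\{h:\mathrm{vol}(P(h))\ge 1\}$. After translating $P(h)$ so that the origin lies in its interior and replacing each $h_i$ by the support number $h_{P(h)}(u_i)$, one has $0<h_i\le L(h)/a_i$, so a minimizing sequence stays bounded; since $\mathrm{vol}$ is continuous under Hausdorff convergence of uniformly bounded convex bodies, the infimum is attained at some $h^\ast$, necessarily with $\mathrm{vol}(P(h^\ast))=1$. A Lagrange-multiplier argument based on the classical first-variation identity $\partial\,\mathrm{vol}(P(h))/\partial h_i=\text{(area of the facet of }P(h)\text{ with outer normal }u_i)=S_{P(h)}(\{u_i\})$ yields $a_i=\lambda\,S_{P(h^\ast)}(\{u_i\})$ for every $i$ with a single constant $\lambda>0$, i.e.\ $\mu=\lambda\,S_{P(h^\ast)}$; since the surface area measure in $\R^3$ is $2$-homogeneous, the polytope $\sqrt{\lambda}\,P(h^\ast)$ has surface area measure exactly $\mu$. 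This already proves the last sentence of the theorem.

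\textbf{Existence in general and uniqueness.} For arbitrary $\mu$ satisfying the hypotheses I would choose finitely supported $\mu_k\rightharpoonup\mu$ with $\int x\,d\mu_k=0$ and $\mathrm{supp}\,\mu_k$ off every great circle (partition $S^2$ into small cells, concentrate each cell's mass at one of its points, then add an arbitrarily small correction to restore the vanishing barycenter), apply the finitely supported case to get polytopes $K_k$ with $S_{K_k}=\mu_k$, translate so their Steiner points sit at the origin, and extract by the Blaschke selection theorem a Hausdorff-convergent subsequence $K_k\to K$. The two points to check are that the $K_k$ are uniformly bounded (bounded total surface area plus an isoperimetric bound) and that $K$ is full-dimensional: if $K$ were flat its surface area measure would live on two antipodal points, hence on a great circle, contradicting $S_{K_k}\rightharpoonup\mu$; concretely one shows the minimal widths of the $K_k$ stay bounded below, since otherwise $\mu_k$, and hence $\mu$, would concentrate near a great circle. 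Since $K$ is a genuine convex body, weak-$\ast$ continuity of the surface area measure under Hausdorff convergence gives $S_K=\lim_k S_{K_k}=\mu$. For uniqueness, if $S_K=S_L=\mu$ then $3V_1(K,L)=\int_{S^2}h_L\,dS_K=\int_{S^2}h_L\,d\mu=\int_{S^2}h_L\,dS_L=3V(L)$, and symmetrically $V_1(L,K)=V(K)$; Minkowski's first inequality $V_1(K,L)^3\ge V(K)^2V(L)$ then forces $V(L)\ge V(K)$, by symmetry $V(K)=V(L)$, and the equality case of that inequality forces $K$ and $L$ to be homothetic, hence --- having equal volume --- translates of each other.

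\textbf{Main obstacle.} The heart of the matter is the finitely supported existence step: the hypothesis that $\mu$ is not carried by a great circle is precisely what makes the feasible region of the minimization compact and guarantees that the optimal polytope realizes every $u_i$ as a genuine facet normal, and rigorously justifying the first-variation formula for $\mathrm{vol}(P(h))$ --- especially at configurations where some $u_i$ contributes no facet --- is the main technical work. The secondary difficulty, excluding degeneration of the limit body in the approximation step and establishing weak-$\ast$ continuity of the surface area measure, again traces back to the same non-degeneracy hypothesis.
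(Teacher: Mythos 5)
The paper does not prove this theorem: it is stated with citations to the classical literature (Alexandrov, Minkowski, Schneider) and invoked as a black box in the proofs of Theorem~\ref{thm:image} and Theorem~\ref{cor:degeneracy}, so there is no in-paper proof to compare against. Your sketch follows the standard textbook route: Minkowski's variational argument (minimize $L(h)=\sum_i a_ih_i$ over $\{h:\mathrm{vol}(P(h))\ge1\}$, identify the Lagrange condition with the surface area data, rescale using $2$-homogeneity of the surface area measure) for existence in the finitely supported case, approximation by finitely supported measures plus Blaschke selection and weak-$\ast$ continuity of $K\mapsto S_K$ for the general case, and Minkowski's first inequality with its equality case for uniqueness. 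This is correct in outline and is essentially the proof found in Schneider's monograph; you also flag the genuine technical subtleties (first-variation formula at configurations with missing facets, non-degeneration of the limit) accurately.

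One imprecision worth fixing: you assert that the great-circle hypothesis alone \emph{is equivalent to} the $u_i$ positively spanning $\R^3$. It only gives linear spanning --- the three coordinate unit vectors avoid every great circle, yet they lie in a closed halfspace, so the corresponding $P(h)$ would be unbounded. Positive spanning, and hence boundedness of $P(h)$, requires the great-circle condition \emph{together with} the zero-barycenter condition and $a_i>0$: if all $u_i$ lay in $\{\langle\cdot,w\rangle\ge0\}$, then $0=\sum_i a_i\langle u_i,w\rangle$ with nonnegative summands forces $\langle u_i,w\rangle=0$ for every $i$, putting the support on the great circle $w^{\perp}$. Since you do use the barycenter condition elsewhere (translation-invariance of $L$, preserving it in the approximating sequence), this is a misattribution rather than a gap; the remaining steps --- the a priori bound $0<h_i\le L(h)/a_i$ after normalizing support numbers, the exclusion of flat limits via concentration near a great circle, and the Brunn--Minkowski uniqueness argument with the equality case for full-dimensional bodies --- all check out.
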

\begin{proof}[Proof of Theorem~\ref{thm:image}.]
    Let $f\in \Imm(S^2,\R^3)$ and $q_f=\Phi(f)$. Let $S=f(S^2)$ and $V$ be the surface enclosed by $S$. Therefore, \[\int_{S^2}q_f(x)|q_f(x)|\,dm=\int_{S^2}a_f(x)n_f(x)dm=\int_S n_f dS.\] Thus, this is the integral of the normal vector of a closed surface in $\R^3$. A simple application of the divergence theorem shows that the integral of  the normal vector of the closed surface is zero. To see this, let $\{e_i\}_{i=1}^3$ be the unit basis vectors of $\R^3$. For $i=1,2,3$, \[\int_S (n_f\cdot e_i)\, dS= \int_V (\nabla \cdot e_i) \,dV = 0. \] Therefore, $\int_{S^2}q_f(x)|q_f(x)|\,dm=0$ and the image of $\Phi$ is contained in $\mathcal{U}$. 
    
    To prove the converse direction let $q\in \mathcal{U}$. We aim to construct a convex body $f$ with $\mu_{q_f}$ arbitrarily close to $\mu_q$. By the definition of $\mathcal{U}$ the measure $\mu_q$  satisfies $\int_{S^2}n\,d\mu_{q}(n)=0$. Since finitely supported measures are dense with respect to the $\operatorname{WFR}$ metric, we can choose a finitely supported measure $\overline{\mu_q}$  such that $\int_{S^2}n\,d\overline{\mu_{q}}(n)=0$ and $\operatorname{WFR}(\mu_q,\overline{\mu_q})<\epsilon/3$.
    
    If the support of $\overline{\mu_q}$ is not concentrated on a great circle we can invoke the Minkowski theorem and the result follows. For the general case we will slightly deform the measure as follows.
    Define \[\hat{\mu_q}:=\overline{\mu_q}+\sum_{i=1}^3\frac{\epsilon}{18} \delta_{e_i}+\sum_{i=1}^3\frac{\epsilon}{18} \delta_{-e_i}\] where $\{e_i\}_{i=1}^3$ is the set of unit basis vectors of $\R^3$. Then $\hat{\mu_q}$ is a finitely supported measue and satisfies $\int_{S^2}n\,d\hat{\mu_q}(n)=0$ and $\hat{\mu_q}$ is not supported on a single great circle. Moreover, $\operatorname{WFR}(\overline{\mu_q},\hat{\mu_q})<\epsilon/3$. By the Minkowski Theorem  (Theorem~\ref{mink}) there exists a convex polytope with surface area measure given by $\hat{\mu_{q}}$. Let $f\in W^{1,\infty}(S^2,\R^3)$ be the PL spherical parameterization of this convex body, so that $\mu_{q_f}=\hat{\mu_{q}}$. Thus, there exists $\gamma\in\Gamma(M)$ such that $\|q_f-q*\gamma\|_{L^2}<\operatorname{WFR}(\mu_{q_f},\mu_q)+\epsilon/3$. Therefore, 
    \begin{equation*}
        \|q_f-q*\gamma\|_{L^2}\leq \operatorname{WFR}(\mu_{q_f},\mu_q)+\epsilon/3= \operatorname{WFR}(\hat{\mu_q},\mu_q)+\epsilon/3 \leq\operatorname{WFR}(\hat{\mu_q},\overline{\mu_q})+\operatorname{WFR}(\overline{\mu_q},\mu_q)+\epsilon/3<\epsilon,
    \end{equation*}
    which concludes the proof.
\end{proof}

\subsection{Characterizing the degeneracy of the SRNF distance}
As a second important consequence of the our equivalence result we can give a detailed proof of the degeneracy of the SRNF distance for smooth surfaces. Degeneracy results were studied in \cite{klassen2020closed} and it was further characterized for certain PL  surfaces in \cite{bauer2022SRNF}. Here we will generalize the characterization of \cite{bauer2022SRNF} to smooth surfaces:
\begin{theorem}\label{cor:degeneracy}
For any smooth, regular surface $f\in C^\infty(S^2,\R^3)\cap \Imm(S^2,\mathbb R^3)$ there exists a unique (up to translations) convex body \textcolor{black}{whose boundary (parameterized via the map $f_1$)} is indistinguishable from $f$ by the SRNF shape distance, i.e, $ d_{\mathcal S}([f],[f_1])=0$.
\end{theorem}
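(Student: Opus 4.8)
The plan is to combine the equivalence result of Theorem~\ref{thm:maintheorem} with the characterization of the image in Theorem~\ref{thm:image} and the uniqueness part of Minkowski's Theorem. Given a smooth regular $f\in C^\infty(S^2,\R^3)\cap\Imm(S^2,\R^3)$, form its SRNF $q_f=\Phi(f)$ and its associated surface area measure $\mu_{q_f}\in\mathcal M(S^2)$. By the divergence-theorem computation in the proof of Theorem~\ref{thm:image} we have $\int_{S^2}n\,d\mu_{q_f}(n)=0$. The first case to handle is when the support of $\mu_{q_f}$ is \emph{not} concentrated on a great circle: then Minkowski's Theorem (Theorem~\ref{mink}) produces a convex body $K$, unique up to translation, whose surface area measure is exactly $\mu_{q_f}$; let $f_1\in W^{1,\infty}(S^2,\R^3)$ be a Lipschitz (spherical) parametrization of $\partial K$, so that $\mu_{q_{f_1}}=\mu_{q_f}$. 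Then Theorem~\ref{thm:maintheorem} gives $d_{\mathcal S}([f],[f_1])=\WFR(\mu_{q_f},\mu_{q_{f_1}})=\WFR(\mu_{q_f},\mu_{q_f})=0$, which is the desired degeneracy.

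The remaining point is the case where $\mu_{q_f}$ is supported on a great circle; I expect this to be the only genuine subtlety. If $n_f$ takes values (up to null sets) in a single great circle $C=S^2\cap P^\perp$ for some line $P$, then the image surface $f(S^2)$ is, up to translation, contained in the plane $P^\perp$, forcing it to be degenerate (a doubly-covered planar convex region); one shows the surface area measure then has the form $\tfrac12(\text{pushforward under }\pm n)$ of the planar region's boundary data, and in particular $\mu_{q_f}$ is a sum of two measures on the two hemispheres bounded by $C$ whose ``vector barycenter'' vanishes separately. The cleanest route is: argue that for a \emph{smooth immersion} $f$ of $S^2$ the Gauss map $n_f$ cannot have image contained in a great circle at all — indeed, the normal field of a $C^\infty$ immersion of the closed $2$-sphere is surjective onto $S^2$ (by degree theory, the Gauss map of a closed immersed surface in $\R^3$ has well-defined degree and its image, being open by regularity of generic values together with closed by compactness, is all of $S^2$, or at minimum is not contained in any great circle). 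This is where I would spend the most care: pinning down exactly why smoothness and regularity of $f$ on the closed manifold $S^2$ rule out the degenerate Minkowski case. Once that is established, the great-circle case is vacuous and the first paragraph finishes the proof.

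For uniqueness of the convex body in the statement: suppose $f_1$ and $f_2$ both parametrize convex bodies with $d_{\mathcal S}([f],[f_i])=0$. By Theorem~\ref{thm:maintheorem}, $\WFR(\mu_{q_f},\mu_{q_{f_i}})=0$, and since $\WFR$ is a genuine metric on $\mathcal M(S^2)$ this forces $\mu_{q_{f_1}}=\mu_{q_{f_2}}=\mu_{q_f}$; the uniqueness clause of Minkowski's Theorem then says $\partial K_1$ and $\partial K_2$ coincide up to translation, hence $[f_1]=[f_2]$ in $\mathcal S$. The only loose end is confirming that $\WFR(\mu,\nu)=0\iff\mu=\nu$, which is standard for the WFR metric (it is a metric, cf.~\cite{chizat2018unbalanced,liero2018optimal}), and that a convex body's boundary admits a bi-Lipschitz spherical parametrization realizing it as an element of $\Imm(S^2,\R^3)/\DiffLip(S^2)$, which is routine. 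Thus the main obstacle is genuinely the great-circle exclusion; everything else is an assembly of the already-established equivalence theorem, the image characterization, and Minkowski's uniqueness.
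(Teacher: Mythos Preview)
Your proposal is correct and follows essentially the same route as the paper: verify the closure condition, rule out the great-circle case via surjectivity of the Gauss map, invoke Minkowski's theorem for existence and uniqueness of the convex body, and conclude $d_{\mathcal S}=0$ from Theorem~\ref{thm:maintheorem}. The paper dispatches the great-circle exclusion in one line by citing \cite[Prop.~4.33]{tapp2016differential} for surjectivity of the Gauss map of a smooth regular closed surface, rather than arguing via degree theory as you sketch; your uniqueness paragraph (using that $\WFR$ is a genuine metric to force equality of surface area measures before applying Minkowski uniqueness) is slightly more explicit than the paper's, which simply appeals to the uniqueness clause of Theorem~\ref{mink}.
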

\begin{proof}[Proof of Theorem~\ref{cor:degeneracy}]
    Let $f\in C^\infty(S^2,\R^3)\cap \Imm(S^2,\mathbb R^3)$ be a regular surface. By \cite[Prop. 4.33]{tapp2016differential} the Gauss map of $f$ is surjective.
   Thus the image of $q_f$ is not contained in a single hyperplane of $\R^3$. Furthermore, $\int_{S^2}q_f(x)|q_f(x)|\,dm=0$. Thus, by Theorem \ref{mink}, there exists a unique convex body (up to translation) with surface area measure given by $\mu_{q_f}$. By Theorem \ref{thm:maintheorem} the surface $f$ and the \textcolor{black}{boundary of the} convex body are SRNF distance 0 from each other. 
\end{proof}

\section{The WFR metric as a diffeomorphic optimization problem}
In this section, we will generalize the results of the previous sections for the Wasserstein Fisher Rao distance on any manifold and for any coeffecient $\delta$. Thus characterizing the Wasserstein Fisher Rao distance as a diffeomorphic optimization problem. Let $N$ be a smooth, connected, compact, oriented Riemannian manifold. Define the cone over $N$ via
$\mathcal{C}(N):= (N\times \R^{\geq0})/(N\times\{0\})$. If we let $d$ denote the geodesic distance on $N$ and fix some $\delta \in (0,\infty)$, then we can define a metric on $\mathcal{C}(N)$ via
\[d_{\mathcal{C}(N)}((n_1,r_1),(n_2,r_2))^2=4\delta^2r_1^2+4\delta^2r_2^2-8\delta^2r_1r_2 \overline{\cos}(d(n_1,n_2)/2\delta).\]
Let $M$ be another smooth, connected, compact, oriented Riemannian manifold. \textcolor{black}{Again we denote the Lebesgue measure on $M$ by $m$.} Any function $q:M\to \mathcal{C}(N)$  can be decomposed into component functions by $q(x)=(\overline{q}(x),q^\circ(x))$ where $\overline{q}:M\to N$ and $q^\circ:M\to \R^{\geq0}$. We can thus define \[\hat{q}:M\to\R^{\geq0}\text{ via  for all }x\in M,\,\,\hat{q}(x)=\sqrt{2}\delta q^\circ(x).\]
Given $q_1,q_2:M\to\mathcal{C}(N)$. The $L^2$ distance between $q_1$ and $q_2$ is given by \[d_{L^2}(q_1,q_2)^2=\int_M d_{\mathcal{C}(N)}(q_1(x),q_2(x))^2 dm.\]
By decomposing $q_1$ and $q_2$, we can alternatively write 
\begin{equation}
    d_{L^2}(q_1,q_2)^2=\int_M \hat{q_1}(x)^2 dm +\int_M \hat{q_2}(x)^2 dm -2\int_M \hat{q_1}(x)\hat{q_2}(x)\overline{\cos}(d(\overline{q_1}(x),\overline{q_2}(x))/2\delta)dm
\end{equation}
The $L^2$ cost of a function $q:M\to \mathcal{C}(N)$ as the distance from $q$ to the function that maps all of $M$ to the cone point. In particular, using the decomposition of $q$, this distance is given by \[d_{L^2}(0,q)^2 = \int_M \hat{q}(x)^2 \,dm.\] Thus, the space of $L^2$-functions from $M$ to $\mathcal{C}(N)$ as \[L^2(M,\mathcal{C}(N)):=\{q:M\to \mathcal{C}(N) \text{ s.t. } d_{L^2}(0,q)^2<\infty\}\] 
and we equip $L^2(M,\mathcal{C}(N))$ with the metric $d_{L^2}$. We define the right action of the diffeomorphisms of on $L^2(M,\mathcal{C}(N))$ component-wise. We treat $\hat{q}$ as a half density and define the action of $\Gamma(M)$ on this component as the action on half-densities. Thus, we define the action of $\Gamma(M)$ on $L^2(M,\mathcal{C}(N))$ given by
\begin{align*}
    L^2(M,\mathcal{C}(N))\times\Gamma(M)&\to L^2(M,\mathcal{C}(N)) \text{ via }\\
    (\overline{q},\hat{q}),\gamma&\mapsto \left(\overline{q}\circ \gamma,\hat{q}\circ\gamma \cdot \sqrt{|D\gamma|}\right).
\end{align*}
\textcolor{black}{Next we define several measures associated with a function $q\in L^2(M,\mathcal{C}(N))$.} First, we define $\nu_q\in\mathcal{M}(M)$ such that for any $U\subseteq M$ open \[\nu_q(U)=\int_{U}\hat{q}(x)^2dm.\] 
Note that $\nu_q\ll m$ and $\frac{\nu_q}{m}=\hat{q}^2$. Further, we can define a pushforward of $\nu_q$ via $\overline{q}$. In particular, for every $q\in L^2(M,\mathcal{C}(N))$, we can define a Borel measure on $N$ given by $\mu_q:=\overline{q}_*\nu_q.$ In other words for all $U\subseteq N$ open \[\mu_q(U)=\int_{\overline{q}^{-1}(U)} \hat{q}^2(x)dm.\]
The main result of this section is to show that the Wasserstein Fisher Rao distance can be written as the distance between the orbits associated with the measures:
\begin{theorem}\label{thm:cone_iso}
Let $N$ be a smooth connected compact Riemannian manifold and $M$ be a smooth connected compact Riemannian manifold of dimension 2 or higher.
\begin{enumerate}[a.)]
\item For all $\mu_1,\mu_2\in\mathcal{M}(N)$ and $q_1,q_2\in L^2(M,\mathcal{C}(N))$ such that $\mu_1=\overline{q_1}_*\nu_{q_1}$ and $\mu_2=\overline{q_2}_*\nu_{q_2}$ we have
\begin{align*}
    \WFR_\delta(\mu_1,\mu_2)= \inf\limits_{\gamma\in\Gamma(N)}d_{L^2}(q_1,q_2*\gamma).
\end{align*}
\item Moreover, for all $\mu \in \mathcal{M}(N)$ there exists $q\in L^2(M,\mathcal{C}(N))$ such that $\mu=\overline{q}_*\nu_q$. If $\mu$ is a finitely supported measure given by $\mu=\sum_{i=1}^{t_0}a_i\delta_{u_i}$, then one can choose $q$ piecewise constant. More specifically, the function $q$ given by
\[q(x)=\begin{cases}\left(u_j,\sqrt{\frac{a_j}{\text{area}(\sigma_j)}}\right)&\text{ if } 1\leq j\leq t_0\\
(u_1,0)& \text{ if } t_0< j\leq t_1\end{cases},\]
where $\{\sigma_j\}_{j=1}^{t_1}$ is a subdivision of the canonical triangulation of $M$ with $t_1\geq t_0$, satisfies $\mu=\overline{q}_*\nu_q$. 
\end{enumerate}
\end{theorem}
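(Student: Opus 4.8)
\emph{Overall strategy and part (b).} The plan is to follow the proof of Theorem~\ref{thm:maintheorem} step by step, upgrading every statement to cone-valued maps and to a general $\delta$; the only genuinely new ingredient is a bi-Lipschitz rearrangement (Moser/Oxtoby--Ulam-type) result for $M$, valid precisely because $\dim M\ge 2$, which I expect to be the main obstacle. I would first settle part (b). For $\mu=\sum_{i=1}^{t_0}a_i\delta_{u_i}$ the displayed $q$ works by inspection: $\nu_q$ assigns mass $a_j$ to $\sigma_j$ (and $0$ to the remaining cells) and $\overline q\equiv u_j$ on $\sigma_j$, so $\mu_q=\overline q_*\nu_q=\sum_j a_j\delta_{u_j}=\mu$. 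For a general $\mu\ne 0$ (take $q$ identically the cone point if $\mu=0$): since $(M,m)$ is a non-atomic finite measure space on a standard Borel space, there is a Borel map $T\colon M\to N$ with $T_*m=\lambda\mu$ for $\lambda:=m(M)/\mu(N)$, and then $\overline q:=T$, $\hat q:=\lambda^{-1/2}$ give $\nu_q=\lambda^{-1}m$ and $\mu_q=\overline q_*\nu_q=\mu$, which also proves the first assertion of (b).

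\emph{Part (a), the inequality $\WFR_\delta\le\inf_\gamma d_{L^2}$.} This is the cone version of the Lipschitz bound behind Lemma~\ref{lem:continuity}. As in the two lemmas after Lemma~\ref{lem:pf-RN}, $\mu_q=\overline q_*\nu_q$ and $\nu_q/m=\hat q^2$, and the $\Gamma(M)$-action fixes $\mu_q$: since the $\hat\cdot$-component of $q*\gamma$ is $(\hat q\circ\gamma)\sqrt{|D\gamma|}$, a change of variables gives $\nu_{q*\gamma}(U)=\nu_q(\gamma(U))$, hence $\mu_{q*\gamma}=(\overline q\circ\gamma)_*\big((\gamma^{-1})_*\nu_q\big)=\overline q_*\nu_q=\mu_q$. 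Given $q_1,q_2\in L^2(M,\mathcal C(N))$, set $\rho:=(\overline{q_1},\overline{q_2})\colon M\to N\times N$, $\gamma_i:=\rho_*\nu_{q_i}$, $\gamma:=\rho_*m$; the Claim in the proof of Lemma~\ref{lem:continuity} gives $(\gamma_1,\gamma_2)\in\Gamma(\mu_{q_1},\mu_{q_2})$, Lemma~\ref{lem:pf-RN} gives $\gamma_i\ll\gamma$ and $\hat q_i^2=\tfrac{\gamma_i}{\gamma}\circ\rho$ a.e., and the change-of-variables computation there, with $\overline{\cos}(d(u,v)/2\delta)$ replacing $\langle u,v\rangle$, collapses $J_\delta(\gamma_1,\gamma_2)$ to $d_{L^2}(q_1,q_2)^2$. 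So $\WFR_\delta(\mu_{q_1},\mu_{q_2})\le d_{L^2}(q_1,q_2)$ for all $q_1,q_2$; applied with $q_2*\gamma$ in place of $q_2$ and using $\mu_{q_2*\gamma}=\mu_2$, this gives $\WFR_\delta(\mu_1,\mu_2)\le\inf_\gamma d_{L^2}(q_1,q_2*\gamma)$.

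\emph{Part (a), the inequality $\WFR_\delta\ge\inf_\gamma d_{L^2}$.} Exactly as in the proof of Theorem~\ref{thm:maintheorem}, I would reduce to piecewise-constant maps: these are $d_{L^2}$-dense in $L^2(M,\mathcal C(N))$ (separable target), and combining density with the bound above and the triangle inequalities for $d_{L^2}$ and $\WFR_\delta$ (the action being isometric) reduces the claim to $\inf_\gamma d_{L^2}(p_1,p_2*\gamma)\le\WFR_\delta(\mu_{p_1},\mu_{p_2})$ for piecewise-constant $p_1,p_2$. Moreover, by the rearrangement result one may reparametrize each $p_i$ so that $\hat{p_i}$ is constant on each level set of $\overline{p_i}$; since reparametrizing changes neither $\mu_{p_i}$ nor the quantity $\inf_\gamma d_{L^2}(p_1,p_2*\gamma)$, we may assume $p_1,p_2$ have this balanced form, with $\mu_{p_1}=\sum_i a_i\delta_{u_i}$, $\mu_{p_2}=\sum_j b_j\delta_{v_j}$. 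A semi-coupling of $\mu_{p_1},\mu_{p_2}$ is then a pair of nonnegative matrices $((\gamma_0)_{ij}),((\gamma_1)_{ij})$ with row sums $a_i$ and column sums $b_j$, and $J_\delta$ is the finite sum $4\delta^2\big(\sum_i a_i+\sum_j b_j-2\sum_{ij}\sqrt{(\gamma_0)_{ij}(\gamma_1)_{ij}}\,\overline{\cos}(d(u_i,v_j)/2\delta)\big)$. Writing $A_i=\{\overline{p_1}=u_i\}$, $B_j=\{\overline{p_2}=v_j\}$, set $c_{ij}:=m(A_i)(\gamma_0)_{ij}/a_i$, $d_{ij}:=m(B_j)(\gamma_1)_{ij}/b_j$ (so $\sum_j c_{ij}=m(A_i)$, $\sum_i d_{ij}=m(B_j)$). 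If one can produce $\gamma\in\Gamma(M)$ together with subdivisions $A_i=\bigsqcup_j A_{ij}$, $B_j=\bigsqcup_i B_{ij}$ with $m(A_{ij})=c_{ij}$, $m(B_{ij})=d_{ij}$, $\gamma(A_{ij})=B_{ij}$ and $|D\gamma|\equiv d_{ij}/c_{ij}$ on $A_{ij}$, then a direct computation over the pieces $A_{ij}$ yields $d_{L^2}(p_1,p_2*\gamma)^2=J_\delta(\gamma_0,\gamma_1)$ (the level-set terms giving $\sum a_i+\sum b_j$, and the cross term giving $2\sum\sqrt{(\gamma_0)_{ij}(\gamma_1)_{ij}}\,\overline{\cos}$ because $\hat{p_1}$, $\hat{p_2}\circ\gamma$ and $|D\gamma|$ are all constant on $A_{ij}$). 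Taking the infimum over semi-couplings gives $\inf_\gamma d_{L^2}(p_1,p_2*\gamma)\le\WFR_\delta(\mu_{p_1},\mu_{p_2})$, and with the first inequality this yields equality for piecewise-constant maps and, via density, for all $q_1,q_2$.

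\emph{The main obstacle.} Everything above is routine once such a $\gamma$ exists (and once the rearrangement result used to balance the $p_i$ is available). Constructing it is the technical core and the place where $\dim M\ge 2$ is indispensable. The statement needed is: given a manifold $M$ of dimension $\ge 2$ and two measurable finite partitions of $M$ into cells of prescribed positive volumes ($A_{ij}$ of volume $c_{ij}$, $B_{ij}$ of volume $d_{ij}$, matched combinatorially), there is $\gamma\in\Gamma(M)$ sending the first partition to the second with $|D\gamma|$ equal to the volume ratio $d_{ij}/c_{ij}$ on each cell. After refining a triangulation of $M$ one may take the $A_{ij}$, $B_{ij}$ to be finite unions of (curvilinear) simplices, and a constant-Jacobian bi-Lipschitz map between two such cells of given volumes is elementary (affine, in cube coordinates); the real work is to arrange the two decompositions so that these local maps glue across cell boundaries into a single bi-Lipschitz homeomorphism of $M$ — this uses the flexibility of bi-Lipschitz (or PL) maps that exists only in dimension $\ge 2$, fails for $\dim M=1$ (where reparametrizations of a curve must preserve cyclic order), and is the higher-dimensional analogue of the combinatorial heart of Theorem~\ref{thm:maintheorem_old} in~\cite{bauer2022SRNF}. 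The same flexibility supplies the rearrangement used above to put $p_1,p_2$ in balanced form. If exact constancy of $|D\gamma|$ per cell is awkward, it suffices to achieve it up to arbitrarily small $L^1$-error by further refinement, which perturbs $d_{L^2}(p_1,p_2*\gamma)$ negligibly and is absorbed into the usual $\epsilon$.
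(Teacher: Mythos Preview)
Your overall strategy matches the paper's: establish the one-sided Lipschitz bound $\WFR_\delta\le d_{L^2}$ via a pushed-forward semi-coupling (this is exactly Lemma~\ref{lem:continuity_cone}), reduce by density to piecewise-constant maps, and settle those by an explicit construction of reparametrizations approximating an arbitrary semi-coupling. Your treatment of part~(b) for finitely supported $\mu$ is identical to the paper's; for general $\mu$ your direct Borel-isomorphism argument is cleaner than the paper's route, which instead builds a Cauchy sequence $\{q_n^*\}$ in $L^2(M,\mathcal C(N))$ from finitely supported approximations and takes the limit.

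The one place where the paper does something genuinely simpler than what you propose is the piecewise-constant core (the paper's Lemma~\ref{lem:cone_iso_dense}). You aim to construct a single $\gamma\in\Gamma(M)$ that sends a partition $\{A_{ij}\}$ to a matched partition $\{B_{ij}\}$ with \emph{constant} Jacobian on each cell, and you correctly flag the global gluing across cell boundaries as the main obstacle. The paper sidesteps this entirely with a shrinking trick: for a parameter $r\in(0,1)$, it places inside each cell $\sigma_{ij}$ a concentric sub-simplex $\tilde\sigma_{ij}$ of volume fraction $r$, maps $\tilde\sigma_{ij}\to\tilde\tau_{ij}$ by any constant-Jacobian PL homeomorphism, and then extends \emph{arbitrarily} to a PL homeomorphism of $M$ on the complement (possible because the complements are homeomorphic, which uses $\dim M\ge 2$). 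The integral over the inner simplices reproduces $r\sum_{ij}\sqrt{A_{ij}B_{ij}}\,\overline{\cos}(\cdot)$ exactly, and a Cauchy--Schwarz bound shows the contribution of the leftover region is $O(1-r)$, so letting $r\to 1$ gives the desired approximation. No gluing of constant-Jacobian pieces is needed, and your extra rearrangement step (balancing $\hat p_i$ on level sets of $\overline{p_i}$) becomes unnecessary, since the paper works cell-by-cell on the triangulation directly. Your fallback remark that approximate constancy of $|D\gamma|$ suffices is exactly the right instinct, and the shrinking trick is the concrete way to realize it.
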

\textcolor{black}{\begin{remark}\label{rem:RelationToCH} In the following remark we will relate the above theorem to the fact that the Wasserstein and Wasserstein Fisher Rao metrics both arise from a Riemannian submersion \cite{OttoPic,GALLOUET20184199}.  We start by briefly recalling these constructions: the celebrated result of Otto~\cite{OttoPic}, shows that \begin{align*} \pi: \Diff(N) & \to \operatorname{Dens}_1(N) \text{ given by } \\ \varphi & \mapsto \varphi_*(\rho_0)
\end{align*} is a formal Riemannian submersion of the $L^2$ metric on $\Diff(N)$ to the Wasserstein metric on the space of smooth probability densities, denoted by $\operatorname{Dens}_1(N)$.  More recently Gallou\"{e}t and Vialard~\cite{GALLOUET20184199} showed that there exists a similar submersion construction in the unbalanced setting. Therefore they considered the automorphism group
$\operatorname{Aut}(\mathcal{C}(N))$, which can be viewed as the semi-direct product of $\Diff(N)$ and $C^\infty(N,\R^+)$, i.e., any element in $\operatorname{Aut}(\mathcal{C}(N))$ can be written as the tuple $(\varphi, f)$ where $\varphi\in\Diff(N)$ and $f\in C^\infty(N,\R^+)$. They then showed that
\begin{align*} \pi_0: \operatorname{Aut}(\mathcal{C}(N)) & \to \operatorname{Dens}(N) \text{ given by } \\ (\varphi,f) & \mapsto \varphi_*(f^2\rho_0)
\end{align*}  is a Riemannian submersion of the $L^2$ metric on $\operatorname{Aut}(\mathcal{C}(N))$ to the Wasserstein Fisher Rao metric on the space of smooth densities, denoted by $\operatorname{Dens}(N)$. In the case where $M=N$, Theorem~\ref{thm:cone_iso} may be viewed to extend the result of \cite{GALLOUET20184199} to both lower regularity measures and more general and lower regularity maps in the top space, albeit without obtaining a Riemannian submersion.  It is, however, easy to see that in the smooth category our result would indeed follow from the results of \cite{GALLOUET20184199}. Therefore we consider the maps
\begin{align*}
    \iota: \operatorname{Aut}(\mathcal{C}(N)) &\to L^2(N,\mathcal{C}(N)) \text{ given by }\\
    (\varphi,f)&\mapsto q \text{ where } q(x)=(\varphi(x),f(x))
\end{align*}
and 
\begin{align*}
    \psi:L^2(N,\mathcal{C}(N))&\to \mathcal{M}(N)\text{ given by }\\
    q&\mapsto \mu_q
\end{align*}
and note that the following diagram commutes.
\begin{equation*}
\begin{tikzcd}[column sep=tiny,ampersand replacement=\&]
      \operatorname{Aut}(\mathcal{C}(N))\arrow[two heads]{dd}{\pi_0}\arrow[r,phantom,"=" description]\& \operatorname{Diff}(N)\arrow[r,phantom,"\ltimes" description]\& C^{\infty}(N,\R^+) \arrow[rightarrow]{rr}{\iota}\&\qquad\&L^2(N,\mathcal{C}(N))\arrow[two heads]{dd}{\psi} \\
      \\
      \operatorname{Dens}(N) \arrow[hookrightarrow]{rrrr}{\text{Inc.}}\&\&\&\& \mathcal{M}(N)\\
\end{tikzcd}
\end{equation*}
Therefore, when $\mu_1,\mu_2\in \operatorname{Dens}(N)$ and $q_1, q_2$  in the image of $\iota$, Theorem~\ref{thm:cone_iso} follows from the results of \cite{GALLOUET20184199}. Finally, we want to emphasize that this only holds in the case where $M=N$ and only in the smooth category, whereas Theorem~\ref{thm:cone_iso} requires nothing of the relationship between $M$ and $N$ and holds for the space of $L^2$-maps (Borel measures, resp.). 
\end{remark}}

Before we are able to prove Theorem~\ref{thm:cone_iso}, we will show again several technical lemmas. Therefore we will consider specific measures associated with functions $q\in L^2(M,\mathcal{C}(N))$. First, we will show that the orbit of any $q\in L^2(M,\mathcal{C}(N))$ under the action of $\Gamma(M)$ is mapped to the same measure on $N$.
\begin{lemma}
Let $q\in L^2(M,\mathcal{C}(N))$. Then for all $\gamma\in \Gamma(M),\,\mu_{q}=\mu_{q*\gamma}.$
\end{lemma}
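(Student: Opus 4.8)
The plan is to unwind the definitions of the $\Gamma(M)$-action and of the measures $\nu_q$ and $\mu_q = \overline{q}_*\nu_q$, and reduce the claim to the change-of-variables formula for bi-Lipschitz diffeomorphisms of $M$ together with the functoriality of the pushforward. First I would record the component identities for the action: writing $q=(\overline{q},\hat q)$, the definition of the action gives $\overline{q*\gamma}=\overline{q}\circ\gamma$ and, since $\hat q$ is treated as a half-density, $\widehat{q*\gamma}=(\hat q\circ\gamma)\sqrt{|D\gamma|}$ (pointwise a.e.).

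Next I would compute $\nu_{q*\gamma}$ directly. For any open $U\subseteq M$,
\[\nu_{q*\gamma}(U)=\int_U \widehat{q*\gamma}(x)^2\,dm = \int_U (\hat q\circ\gamma)(x)^2\,|D\gamma(x)|\,dm.\]
Because $\gamma\in\Gamma(M)$ is a bi-Lipschitz diffeomorphism, $D\gamma\in L^\infty$ and the change-of-variables (area) formula for Lipschitz maps applies, yielding
\[\int_U (\hat q\circ\gamma)^2\,|D\gamma|\,dm = \int_{\gamma(U)} \hat q^2\,dm = \nu_q(\gamma(U)).\]
Since this holds for all open $U$ (and both sides are finite Borel measures), it follows that $\nu_{q*\gamma}=(\gamma^{-1})_*\nu_q$ as measures on $M$.

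Finally, using that $\overline{q*\gamma}=\overline{q}\circ\gamma$ is measurable and that pushforwards compose, $( \overline{q}\circ\gamma )_*\circ(\gamma^{-1})_* = (\overline{q}\circ\gamma\circ\gamma^{-1})_* = \overline{q}_*$, I would conclude
\[\mu_{q*\gamma}=\big(\overline{q*\gamma}\big)_*\nu_{q*\gamma}=(\overline{q}\circ\gamma)_*\big((\gamma^{-1})_*\nu_q\big)=\overline{q}_*\nu_q=\mu_q.\]

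The only input that is not pure bookkeeping is the change-of-variables formula in the bi-Lipschitz (rather than smooth) category; this is classical, following from the area formula for Lipschitz maps together with $|D\gamma|>0$ a.e. Everything else — the component identities for the action and the functoriality of the pushforward — is routine, so I do not anticipate a genuine obstacle here; the lemma is essentially the "invariance" half of the machinery needed for Theorem~\ref{thm:cone_iso}.
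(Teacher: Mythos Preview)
Your proposal is correct and follows essentially the same approach as the paper: both arguments reduce to the change-of-variables formula for $\gamma\in\Gamma(M)$ applied to the integrand $\hat q^2$. The only cosmetic difference is that you first establish $\nu_{q*\gamma}=(\gamma^{-1})_*\nu_q$ and then invoke functoriality of the pushforward, whereas the paper computes $\mu_{q*\gamma}(U)$ directly for open $U\subseteq N$ in a single line; the underlying content is identical.
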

\begin{proof}
Let $U\subseteq N$ open. Then
\begin{align*}
    \mu_{q*\gamma}(U)&=\int_{\gamma^{-1}(\overline{q}^{-1}(U))} (\hat{q}\circ{\gamma}(x)\cdot\sqrt{|D\gamma|})^2dm\\
    &=\int_{\gamma^{-1}(\overline{q}^{-1}(U))} \hat{q}\circ{\gamma}(x)^2\cdot|D\gamma|\,dm=\int_{\overline{q}^{-1}(U)} \hat{q}(x)^2dm=\mu_{q}(U).
\end{align*}
\end{proof}
\noindent Therefore, we can map each orbit of $q\in L^2(M,\mathcal{C}(N))$ under the half density action by $\Gamma(M)$ to a measure on $N$. 
\noindent As in the previous section, we will first show the result for piecewise constant functions and extend by continuity. We prove the piecewise constant case in the following lemma.
\begin{lemma}\label{lem:cone_iso_dense}
Let $d\geq 2$ and $M$ be a smooth, connected, compact, oriented Riemannian $d$-dimensional manifold with or without boundary. Given two piecewise constant functions $q_1,q_2:M\to \mathcal{C}(N)$, 
\begin{equation*}
    \inf_{\gamma \in \Gamma(M)} d_{L^2}(q_2,q_2*\gamma)=\WFR_\delta(\mu_{q_1},\mu_{q_2}).
\end{equation*}
\end{lemma}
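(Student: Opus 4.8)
\textbf{Proof proposal for Lemma~\ref{lem:cone_iso_dense}.}

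The plan is to adapt the two-sided argument that was used in the piecewise-linear setting of Theorem~\ref{thm:maintheorem_old} (and which is implicit in the combination of Lemma~\ref{lem:continuity} and the proof of Theorem~\ref{thm:maintheorem}), but now for maps into the cone $\mathcal{C}(N)$ rather than into $\R^3\cong\mathcal{C}(S^2)$. For the inequality $\WFR_\delta(\mu_{q_1},\mu_{q_2})\le \inf_{\gamma}d_{L^2}(q_1,q_2*\gamma)$, I would fix any $\gamma\in\Gamma(M)$ and construct an explicit semi-coupling of $\mu_{q_1}$ and $\mu_{q_2}$ realizing cost at most $d_{L^2}(q_1,q_2*\gamma)^2$. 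Exactly as in Lemma~\ref{lem:continuity}, set $\rho=(\overline{q_1},\overline{q_2\circ\gamma}):M\to N\times N$, and define $\gamma_1=\rho_*\nu_{q_1}$, $\gamma_2=\rho_*\nu_{q_2*\gamma}$, $\gamma=\rho_* m$. Since $\mu_{q_1},\mu_{q_2*\gamma}\ll m$, Lemma~\ref{lem:pf-RN} gives $\gamma_1,\gamma_2\ll\gamma$ with $\tfrac{\gamma_1}{\gamma}\circ\rho=\hat{q_1}^2$ and $\tfrac{\gamma_2}{\gamma}\circ\rho=(\widehat{q_2*\gamma})^2$ a.e.; the semi-coupling property $(\operatorname{Proj}_0)_\#\gamma_1=\mu_{q_1}$, $(\operatorname{Proj}_1)_\#\gamma_2=\mu_{q_2*\gamma}=\mu_{q_2}$ follows as in the Claim in Lemma~\ref{lem:continuity}. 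Then the change-of-variables/Radon--Nikodym computation reduces $J_\delta(\gamma_1,\gamma_2)$ to $\int_M\big(\hat{q_1}^2+(\widehat{q_2*\gamma})^2-2\hat{q_1}(\widehat{q_2*\gamma})\overline{\cos}(d(\overline{q_1},\overline{q_2\circ\gamma})/2\delta)\big)dm$, which is exactly $d_{L^2}(q_1,q_2*\gamma)^2$ by the displayed formula for $d_{L^2}$ in terms of the decomposition. Taking the infimum over $\gamma$ gives one direction.

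For the reverse inequality $\inf_{\gamma}d_{L^2}(q_1,q_2*\gamma)\le \WFR_\delta(\mu_{q_1},\mu_{q_2})$, this is where the piecewise-constant hypothesis and the assumption $\dim M\ge 2$ are essential. Here $\mu_{q_1}$ and $\mu_{q_2}$ are finitely supported measures on $N$, say $\mu_{q_1}=\sum a_i\delta_{u_i}$ and $\mu_{q_2}=\sum b_j\delta_{v_j}$, with the $q_i$ constant on the cells of triangulations of $M$. Given any semi-coupling $(\gamma_1,\gamma_2)\in\Gamma(\mu_{q_1},\mu_{q_2})$ — which is then a pair of finitely supported measures on $N\times N$ with prescribed marginals — I would discretize $M$ finely enough (refining both triangulations) so that the masses $\gamma_1(\{(u_i,v_j)\})$ and $\gamma_2(\{(u_i,v_j)\})$ can be matched, up to arbitrarily small error $\varepsilon$, by redistributing the constant pieces of $q_1$ and reparametrizing $q_2$ by some $\gamma\in\Gamma(M)$: on the cell of $M$ assigned to the index pair $(i,j)$ one arranges $q_1$ to point at $u_i$ with the appropriate height so that its contribution to $\nu_{q_1}$ equals $\gamma_1(\{(u_i,v_j)\})$, and $q_2*\gamma$ to point at $v_j$ with height realizing $\gamma_2(\{(u_i,v_j)\})$. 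The key geometric fact making this possible for $\dim M\ge 2$ is that any measurable subset of a cell of prescribed Lebesgue measure can be carved out with bi-Lipschitz freedom, and that a piecewise-constant half-density with prescribed cell masses can be moved to another such configuration by an element of $\Gamma(M)$ — this is the analogue of the rearrangement arguments available for surfaces but unavailable for curves. Running the same change-of-variables computation as above (now in reverse) shows $d_{L^2}(q_1,q_2*\gamma)^2\le J_\delta(\gamma_1,\gamma_2)+C\varepsilon$, and letting $\varepsilon\to0$ and optimizing over semi-couplings yields the claim.

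The main obstacle I anticipate is the second (reverse) direction: carefully showing that an arbitrary semi-coupling between the two finitely supported measures $\mu_{q_1},\mu_{q_2}$ can be realized — up to $\varepsilon$ in $L^2$-cost — by a single piecewise-constant competitor $q_1$ together with a reparametrization $q_2*\gamma$, i.e., the combinatorial bookkeeping of partitioning the domain $M$ into subcells indexed by pairs of support points with the right Lebesgue measures, and verifying that the required domain rearrangements genuinely lie in $\Gamma(M)$ (the bi-Lipschitz group). This is precisely the place where one invokes the subdivision of the canonical triangulation from part (b) of Theorem~\ref{thm:cone_iso} and where $\dim M\ge2$ enters. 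The cone-specific input — that $\overline{\cos}(d(u,v)/2\delta)$ plays the role that $\langle u,v\rangle$ played on $S^2$, and that $\hat q=\sqrt2\,\delta\,q^\circ$ is the correct half-density variable — is routine once the formula for $d_{L^2}$ in the decomposed form is in hand; the analytic heavy lifting is all in the measure-theoretic rearrangement.
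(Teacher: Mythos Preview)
Your approach is essentially that of the paper, and correct in outline. Two remarks are worth making.

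For the direction $\WFR_\delta\le\inf_\gamma d_{L^2}$, you invoke the continuous pushforward construction of Lemma~\ref{lem:continuity_cone} (applied to $q_1$ and $q_2*\gamma$). The paper instead gives a discrete argument inside this lemma: for a PL homeomorphism $\gamma$ it sets $\sigma_{ij}=\gamma^{-1}(\tau_j)\cap\sigma_i$, defines a discrete semi-coupling by $A_{ij}=\int_{\sigma_{ij}}\hat q_1^2\,dm$ and $B_{ij}=\int_{\gamma(\sigma_{ij})}\hat q_2^2\,dm$, and applies Cauchy--Schwarz cell by cell. Both arguments work; yours handles arbitrary $\gamma\in\Gamma(M)$ in one stroke, while the paper's keeps the whole proof in the PL category.

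One small imprecision in your reverse direction: a semi-coupling between finitely supported measures is \emph{not} automatically finitely supported on $N\times N$ (only one marginal of each $\gamma_i$ is constrained). The paper sidesteps this by working from the outset with the discrete semi-coupling formulation of $\WFR_\delta$ for atomic measures established in~\cite{bauer2022SRNF}, so that the optimization runs over matrix pairs $(A,B)$. Your ``rearrangement'' sketch is then made precise as follows: subdivide each $\sigma_i$ into simplices $\sigma_{ij}$ with $\nu_{q_1}(\sigma_{ij})=A_{ij}$, and each $\tau_j$ into $\tau_{ij}$ with $\nu_{q_2}(\tau_{ij})=B_{ij}$; shrink each to a sub-simplex $\tilde\sigma_{ij}\subset\sigma_{ij}^{\mathrm o}$, $\tilde\tau_{ij}\subset\tau_{ij}^{\mathrm o}$ of relative measure $r<1$; map $\tilde\sigma_{ij}\to\tilde\tau_{ij}$ by a PL homeomorphism with constant Jacobian; and extend over the complements, which are PL-homeomorphic precisely because $\dim M\ge2$. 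Letting $r\to1$ gives the approximation. This is the explicit content of the domain rearrangement you correctly identified as the crux.
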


\begin{proof}
Let $\{\sigma_i\}_{i=1}^{t_1}$  and  $\{\tau_j\}_{j=1}^{t_0}$ be triangulations of $M$ such that $q_1$ is constant on each $\sigma_i$ and $q_2$ is constant on each $\tau_j$. Let $\hat{q_1}:M\to \mathbb{R}$, $\overline{q_1}:M\to N$ be the decomposition of $q_1$ and $\hat{q_2}:M\to \mathbb{R}$, $\overline{q_2}:M\to M$ be the decomposition of $q_2$. Define a function
$\langle \cdot,\cdot\rangle:N\times N\to\R \text{  given via  } \langle u,v\rangle=\overline{\cos}(d(u,v)/2\delta).$ A brief computation shows
\begin{align*}
\inf_{\gamma \in \Gamma(M)} d^2_{L^2}(q_1,q_2*\gamma)
=\sum_{i=1}^{t_1}a_i+\sum_{j=1}^{t_0}b_j&-2\sup_{\gamma \in \Gamma(M)}\int_M \hat{q_1}(x)\hat{q_2}(\gamma(x))\sqrt{|D\gamma|}\langle\overline{q_1}(x),\overline{q_2}(\gamma(x))\rangle dm.
\end{align*}
Let $\mathcal{A}$ be the set of all discrete semi-couplings from $\mu_{q_1}$ to $\mu_{q_2}$. Recall
\begin{equation*}
    \WFR_\delta(\mu_{q_1},\mu_{q_2})^2=\sum_{i=1}^{t_1}a_i+\sum_{j=1}^{t_0}b_j-2\sup_{(A,B)\in\mathcal{A}}\sum_{i=1}^{t_1}\sum_{j=1}^{t_0}\sqrt{A_{ij}B_{ij}}\langle u_i,v_j\rangle  
\end{equation*}
Therefore, the theorem is equivalent to showing \begin{equation*} \sup_{(A,B)\in\mathcal{A}}\sum_{i=1}^{t_1}\sum_{j=1}^{t_0}\sqrt{A_{ij}B_{ij}}\langle u_i,v_j\rangle=\sup_{\gamma \in \Gamma(S^2)}\int_M\hat{q_1}(x)\hat{q_2}(\gamma(x))\sqrt{|D\gamma|}\langle\overline{q_1}(x),\overline{q_2}(\gamma(x))\rangle dm.    
\end{equation*} 
\begin{claim}\label{ApproxHom}
 Assume that $(A,B)$ is a discrete semi-coupling from $\mu_{q_1}$ to $\mu_{q_2}$. Then for all $\epsilon>0$ there is a PL homeomorphism $\gamma:M\to M$ such that 
\begin{equation*}\left|\int_M\hat{q_1}(x)\hat{q_2}(\gamma(x))\sqrt{|D\gamma|}\langle\overline{q_1}(x),\overline{q_2}(\gamma(x))\rangle dm-\sum_{i,j}\sqrt{A_{ij}B_{ij}}\langle u_i,v_j\rangle\right|<\epsilon.\end{equation*}
\end{claim}

\noindent\textit{Proof of Claim \ref{ApproxHom}. }Let $(A,B)$ be a discrete semi-coupling from $\mu_{q_1}$ to $\mu_{q_2}$ such that for each $1\leq i\leq t_1$ and $1\leq j\leq t_0$, $A_{ij},B_{ij}>0$. We will first prove the claim for this restricted case and extend it to all semi-couplings by continuity. First we choose a real number $r\in(0,1)$. For each $1\leq i\leq t_1$, subdivide $\sigma_i$ into $n$ smaller $d$-simplexes $\sigma_{ij}$ such that $\hat{q_1}^2=A_{ij}/m(\sigma_{ij})$. Similarly, for each $1\leq j\leq t_0$, subdivide $\tau_j$ into $m$ smaller $d$-simplexes $\tau_{ij}$ such that $\hat{q_2}^2=B_{ij}/m(\tau_{ij})$. For each $1\leq i\leq t_1$ and $1\leq j\leq t_0$, choose a smaller $d$-simplex $\tilde\sigma_{ij}$, whose closure is contained in the interior of $\sigma_{ij}$, such that $m(\tilde\sigma_{ij})=rm(\sigma_{ij})$. Similarly, for each $1\leq i\leq t_1$ and $1\leq j\leq t_0$, choose a smaller $d$-simplex $\tilde\tau_{ij}$, whose closure is contained in the interior of $\tau_{ij}$, such that $m(\tilde\tau_{ij})=rm(\tau_{ij})$. We now construct an orientation preserving PL homeomorphism $\gamma_r:M\to M$. First, for each $1\leq i\leq t_1$ and $1\leq j\leq t_0$, define $\gamma_r:\tilde\sigma_{ij}\to\tilde\tau_{ij}$ to be a PL orientation preserving homeomorphism with constant area multiplication factor, $|D_{\gamma_r}|=m(\tau_{ij})/m(\sigma_{ij})$. Note that \[M-\left(\bigcup\limits_{i=1}^{t_1}\bigcup\limits_{j=1}^{t_0}\tilde\sigma_{ij}^{\mathrm{o}}\right)\text{ is homeomorphic to }M-\left(\bigcup\limits_{i=1}^{t_1}\bigcup\limits_{j=1}^{t_0}\tilde\tau_{ij}^{\mathrm{o}}\right).\] Hence, we can extend the homeomorphism $\gamma_r$ defined on the $\tilde\sigma_{ij}$'s to a homeomorphism from $M$ to $M$.  
Note that on each $\tilde{\sigma}_{ij}$, $\hat{q_2}^2(\gamma_r(x))|D\gamma_r|=B_{ij}/m(\sigma_{ij})$. Write $M=M_1\cup M_2$, where $M_1=\bigcup\limits_{i=1}^{t_1}\bigcup\limits_{j=1}^{t_0}\tilde\sigma_{ij}$ and $M_2=\overline{M-M_1}$. A simple computation shows
\begin{multline*}
\int_{M_1}\hat{q_1}(x)\hat{q_2}(\gamma_r(x))\sqrt{|D\gamma_r|}\langle\overline{q_1}(x),\overline{q_2}(\gamma_r(x))\rangle dm\\
=\sum\limits_{i=1}^{t_1}\sum_{j=1}^{t_0}\int_{\tilde\sigma_{ij}}\hat{q_1}(x)\hat{q_2}(\gamma_r(x))\sqrt{|D\gamma_r|}\langle\overline{q_1}(x),\overline{q_2}(\gamma_r(x))\rangle dm\\
=\sum\limits_{i=1}^{t_1}\sum_{j=1}^{t_0}\frac{\sqrt{A_{ij}B_{ij}}}{m(\sigma_{ij})}m(\tilde\sigma_{ij})\langle u_i,v_j\rangle=\sum\limits_{i=1}^{t_1}\sum_{j=1}^{t_0}\sqrt{rA_{ij}}\sqrt{rB_{ij}}\langle u_i,v_j\rangle.    
\end{multline*}
Meanwhile by the Schwarz inequality,
\begin{multline*}
\left|\int_{M_2}\hat{q_1}(x)\hat{q_2}(\gamma_r(x))\sqrt{|D\gamma_r|}\langle\overline{q_1}(x),\overline{q_2}(\gamma_r(x))\rangle dm\right|
\leq\int_{M_2}\hat{q_1}(x)\hat{q_2}(\gamma_r(x))\sqrt{|D\gamma_r|} dm\\
\leq\sqrt{\int_{M_2}\hat{q_1}^2dm}\sqrt{\int_{M_2}\hat{q_2}^2(\gamma_r(x))|D\gamma_r|\,dm}
=\sqrt{(1-r)\int_{M}\hat{q_1}^2dm }\sqrt{(1-r)\int_{M}\hat{q_2}^2dm }.
\end{multline*}
So as we let $r\to 1$,
\begin{equation*}
    \int_{M_1}\hat{q_1}(x)\hat{q_2}(\gamma_r(x))\sqrt{|D\gamma_r|}\langle\overline{q_1}(x),\overline{q_2}(\gamma_r(x))\rangle dm \to\sum\limits_{i=1}^{t_1}\sum_{j=1}^{t_0}\sqrt{A_{ij}B_{ij}}\langle u_i,v_j\rangle
\end{equation*}
and
\begin{equation*}
\int_{M_2}\hat{q_1}(x)\hat{q_2}(\gamma_r(x))\sqrt{|D\gamma_r|}\langle\overline{q_1}(x),\overline{q_2}(\gamma_r(x))\rangle dm \to 0.
\end{equation*}
Hence, 
\begin{equation*}
    \int_{M}\hat{q_1}(x)\hat{q_2}(\gamma_r(x))\sqrt{|D\gamma_r|}\langle\overline{q_1}(x),\overline{q_2}(\gamma_r(x))\rangle dm \to\sum\limits_{i=1}^{t_1}\sum_{j=1}^{t_0}\sqrt{A_{ij}B_{ij}}\langle u_i,v_j\rangle. 
\end{equation*}Thus Claim \ref{ApproxHom} follows for the case in which for each $1\leq i\leq t_1$ and $1\leq j\leq t_0$, $A_{ij}>0$ and $B_{ij}>0$. The general case then follows immediately from the continuity of
\begin{equation*}
    \sum\limits_{i=1}^{t_1}\sum_{j=1}^{t_0}\sqrt{A_{ij}B_{ij}}\langle u_i,v_j\rangle
\end{equation*} as a function of $(A,B)$.
This completes the proof of Claim \ref{ApproxHom}. It follows that \begin{equation*}
  \sup_{\gamma \in \Gamma(S^2)}\int_M \hat{q_1}(x)\hat{q_2}(x)\langle\overline{q_1}(x),\overline{q_2}(x)\rangle dm\geq\sup_{(A,B)\in\mathcal{A}}\sum_{i=1}^{t_1}\sum_{j=1}^{t_0}\sqrt{A_{ij}B_{ij}}\langle u_i,v_j\rangle .
\end{equation*}
We are left to show the opposite inequality.
\begin{claim}\label{maintheoremclaim3}
 Assume $\gamma$ is a PL-homeomorphism from $M$ to $M$, then there exists a discrete semi-coupling $(A,B)$ such that \begin{equation*} \sup_{\gamma \in \Gamma(M)}\int_M\hat{q_1}(x)\hat{q_2}(\gamma(x))\sqrt{|D\gamma|}\langle\overline{q_1}(x),\overline{q_2}(\gamma(x))\rangle dm\leq\sup_{(A,B)\in\mathcal{A}}\sum_{i=1}^{t_1}\sum_{j=1}^{t_0}\sqrt{A_{ij}B_{ij}}\langle u_i,v_j\rangle .\end{equation*}
\end{claim}
\noindent\textit{Proof of Claim \ref{maintheoremclaim3}. }Let $\gamma:M\to M$ be an orientation preserving PL homeomorphism. For $1\leq i\leq t_1$ and $1\leq j\leq t_0$, define $\sigma_{ij}=\gamma^{-1}(\tau_j)\cap\sigma_i$ and define $\tau_{ij}=\gamma(\sigma_{ij})$. Now define two $(m+1)\times (n+1)$ matrices $A$ and $B$ via:
\begin{itemize}
    \item For $1\leq i\leq t_1$ and $1\leq j\leq t_0$,\, ${\displaystyle A_{ij}=\int_{\sigma_{ij}}\hat{q_1}^2dm \text{ and } B_{ij}=\int_{\tau_{ij}}\hat{q_2}^2dm}.$
    \item For $0\leq i\leq t_1$, $B_{0i}=0\text{ and }\displaystyle{A_{i0}=a_i-\sum\limits_{j=1}^{t_0}\int_{\sigma_{ij}}\hat{q_1}^2dm}.$
    \item For $0\leq j\leq t_0$, $A_{j0}=0\text{ and }\displaystyle{B_{0j}=b_j-\sum\limits_{i=1}^{t_1}\int_{\tau_{ij}}\hat{q_2}^2dm}.$
\end{itemize}

\noindent The pair of matrices $(A,B)$ is a discrete semi-coupling from $\mu_{q_1}$ to $\mu_{q_2}$ by construction. We say that $(A,B)$ is the semi-coupling corresponding to the homeomorphism $\gamma$. Denote the area multiplication factor of $\gamma$ on $\sigma_{ij}$ by $m_{ij}$. Then by the Schwarz inequality,
\begin{multline*}
    \int_{\sigma_{ij}} \hat{q_1}(x)\hat{q_2}(\gamma(x))\sqrt{|D\gamma|}\langle u_i,v_j\rangle dm\leq \sqrt{\int_{\sigma_{ij}} \hat{q_1}^2(x) dm}\sqrt{\int_{\sigma_{ij}} \hat{q_2}^2(\gamma(x))|D\gamma|\,dm} \langle  u_i\cdot v_j\rangle\\
    =\sqrt{\int_{\sigma_{ij}} \hat{q_1}^2(x) dm}\sqrt{\int_{\tau_{ij}} \hat{q_2}^2(x)dm} \langle  u_i\cdot v_j\rangle=\sqrt{A_{ij}}\sqrt{B_{ij}}\langle u_i\cdot v_j\rangle.
\end{multline*}
Summing over all $i$ and $j$ we obtain:
\begin{multline*}
    \int_{M} \hat{q_1}(x)\hat{q_2}(\gamma(x))\sqrt{|D\gamma|}\langle\overline{q_1}(x),\overline{q_2}(\gamma(x))\rangle dm \\=\sum_{i,j}\int_{\sigma_{ij}} \hat{q_1}(x)\hat{q_2}(\gamma(x))\sqrt{|D\gamma|}\langle\overline{q_1}(x),\overline{q_2}(\gamma(x))\rangle dm\leq \sum_{i,j}\sqrt{A_{ij}}\sqrt{B_{ij}}\langle u_i\cdot v_j\rangle.
\end{multline*}

This completes the proof of Claim \ref{maintheoremclaim3}. It follows that,
\begin{equation*} \sup_{\gamma \in \Gamma(M)}\int_M \hat{q_1}(x)\hat{q_2}(\gamma(x))\sqrt{|D\gamma|}\langle\overline{q_1}(x),\overline{q_2}(\gamma(x))\rangle dm\leq\sup_{(A,B)\in\mathcal{A}}\sum_{i=1}^{t_1}\sum_{j=1}^{t_0}\sqrt{A_{ij}B_{ij}}\langle u_i\cdot v_j\rangle.\end{equation*}
and thus the lemma is proved.
\end{proof}
To extend the results to all of $L^2(M,\mathcal{C}(N))$ we will need  the following continuity result:
\begin{lemma}\label{lem:continuity_cone}
The map $(L^2(M,\mathcal{C}(N)),d_{L^2})\to (\mathcal{M}(N),   \WFR_\delta)$ defined via $q\mapsto \overline{q}_*\nu_q$ is Lipschitz continuous with Lipschitz constant $K=1$.
\end{lemma}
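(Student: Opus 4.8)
The plan is to mirror the proof of Lemma~\ref{lem:continuity}: for $q_1,q_2\in L^2(M,\mathcal{C}(N))$ I will build \emph{one} semi-coupling $(\gamma_1,\gamma_2)\in\Gamma(\mu_{q_1},\mu_{q_2})$ whose cost is exactly $d_{L^2}(q_1,q_2)^2$. Since $\WFR_\delta(\mu_{q_1},\mu_{q_2})$ is the infimum of $\sqrt{J_\delta(\gamma_0,\gamma_1)}$ over \emph{all} semi-couplings, exhibiting such a pair immediately gives $\WFR_\delta(\mu_{q_1},\mu_{q_2})\le d_{L^2}(q_1,q_2)$, which is the asserted Lipschitz bound with $K=1$.

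Concretely, I would set $\rho:M\to N\times N$, $\rho(x)=(\overline{q_1}(x),\overline{q_2}(x))$; this is Borel measurable since each component is, and it is the exact analogue of the map $\rho$ used in Lemma~\ref{lem:continuity}. Put $\gamma_1:=\rho_*\nu_{q_1}$ and $\gamma_2:=\rho_*\nu_{q_2}$. Pushing these forward under the two coordinate projections and using that $\mu_{q_i}=\overline{q_i}_*\nu_{q_i}$ by definition, the same short computation as in the claim inside the proof of Lemma~\ref{lem:continuity} shows $(\operatorname{Proj}_0)_\#\gamma_1=\mu_{q_1}$ and $(\operatorname{Proj}_1)_\#\gamma_2=\mu_{q_2}$, i.e. $(\gamma_1,\gamma_2)\in\Gamma(\mu_{q_1},\mu_{q_2})$. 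To evaluate $J_\delta$ I take the dominating measure $\gamma:=\rho_*m$; since $\nu_{q_i}\ll m$ with $\tfrac{d\nu_{q_i}}{dm}=\hat{q_i}^2$, Lemma~\ref{lem:pf-RN} yields $\gamma_1,\gamma_2\ll\gamma$ together with the pointwise identities $\tfrac{d\gamma_i}{d\gamma}\circ\rho=\hat{q_i}^2$ holding $m$-a.e.

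Substituting these Radon--Nikodym derivatives into the definition of $J_\delta(\gamma_1,\gamma_2)$ and applying the change-of-variables formula for the pushforward $\gamma=\rho_*m$ to move each of the three integrals from $N\times N$ back to $M$, the two mass terms become $\int_M\hat{q_1}^2\,dm$ and $\int_M\hat{q_2}^2\,dm$, and the interaction term becomes $-2\int_M\hat{q_1}(x)\hat{q_2}(x)\,\overline{\cos}\bigl(d(\overline{q_1}(x),\overline{q_2}(x))/2\delta\bigr)\,dm$; together these are precisely the displayed expansion of $d_{L^2}(q_1,q_2)^2$, so $\sqrt{J_\delta(\gamma_1,\gamma_2)}=d_{L^2}(q_1,q_2)$ and the lemma follows. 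I do not expect a serious obstacle here: the argument is essentially a verbatim transcription of Lemma~\ref{lem:continuity}, with $|q|^2$ replaced by $\hat{q}^2$, the inner product $u\cdot v$ replaced by $\overline{\cos}(d(u,v)/2\delta)$, and $S^2$ replaced by $N$. The two points that need a little care are the bookkeeping of the $\delta$-dependent constants relating $J_\delta$ to $d_{\mathcal{C}(N)}$ (so that the coupling cost lands on $d_{L^2}(q_1,q_2)^2$ with no stray factor), and the harmless observation that assigning the set $\{q_i^\circ=0\}$ to a chosen basepoint of $N$ in the definition of $\overline{q_i}$ affects neither $\mu_{q_i}$ nor any of the integrals above, since $\hat{q_i}$ vanishes there.
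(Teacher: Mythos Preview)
Your proposal is correct and follows essentially the same argument as the paper: both construct the map $\rho(x)=(\overline{q_1}(x),\overline{q_2}(x))$, push forward $\nu_{q_1},\nu_{q_2},m$ to obtain the semi-coupling and dominating measure, invoke Lemma~\ref{lem:pf-RN} for the Radon--Nikodym identities, and then change variables to identify $J_\delta(\gamma_1,\gamma_2)$ with $d_{L^2}(q_1,q_2)^2$. Your explicit remarks about tracking the $\delta$-constants and the irrelevance of the basepoint choice on $\{q_i^\circ=0\}$ are appropriate caveats that the paper glosses over.
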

\begin{proof}Let $q_1,q_2\in L^2(M,\mathcal{C}(N))$, $\mu_{q_1}= \overline{q_1}_*\nu_{q_1}$, and $\mu_{q_2}= \overline{q_2}_*\nu_{q_2}$. For any semi-coupling $(\gamma_1,\gamma_2)\in\Gamma(\mu_{q_1},\mu_{q_2})$,\[\WFR_\delta(\mu_{q_1},\mu_{q_2})\leq \sqrt{J_\delta(\gamma_1,\gamma_2)}.\]
Thus, to prove the theorem we must construct $(\gamma_1,\gamma_2)\in \Gamma(\mu_{q_1},\mu_{q_2})$ such that $J_\delta(\gamma_1,\gamma_2)=d_{L_2}(q_1,q_2)$. To construct such a semi-coupling  we first construct $\rho:M\to N\times N$ defined a the first component maps of ${q_1}$ and ${q_2}$ on the first and second factor respectively. I.e. the map is given by $\rho(x)=\left(\overline{q_1}(x),\overline{q_2}(x)\right).$
Since $\overline{q_1}$ and $\overline{q_2}$ are individually measurable, then so is $\rho$. We can then define $\gamma_1,\gamma_2\in\mathcal{M}(N\times N)$ via $\gamma_1=\rho_*\nu_{q_1}$ and $\gamma_2=\rho_*\nu_{q_2}$. 
\begin{claim}
The pair of measures, $(\gamma_1,\gamma_2)$ is a semi-coupling from $\mu_{q_1}$ to $\mu_{q_2}$.
\end{claim}
\noindent\textit{Proof of claim. } Let $U\subseteq N$ be open. Thus,
\[\gamma_1(U\times N)= \nu_{q_1}\left(\rho^{-1}(U\times N)\right)= \nu_{q_1}\left(\overline{{q_1}}^{-1}(U)\cap \overline{q_2}^{-1}(N)\right)=\nu_{q_1}\left(\overline{{q_1}}^{-1}(U)\right)=\mu_{q_1}(U) \]
and
\[\gamma_2(N\times U)= \nu_{q_2}\left(\rho^{-1}(N\times U)\right)= \nu_{q_1}\left(\overline{{q_1}}^{-1}(N)\cap \overline{q_2}^{-1}(U)\right)=\nu_{q_1}\left(\overline{q_2}^{-1}(U)\right)=\mu_{q_2}(U).\]
So $(\gamma_1,\gamma_2)$ is a semi-coupling from $\mu_{q_1}$ to $\mu_{q_2}$.
\bigskip\\
Recall from the definition of the functional $J$ we need to construct $\gamma\in\mathcal{M}(N\times N  )$ such that $\gamma_1,\gamma_2\ll\gamma$. Define $\gamma= \rho_*m$. We know $\mu_{q_1},\mu_{q_2}\ll m$. Thus, by Lemma \ref{lem:pf-RN}, $\gamma_1,\gamma_2\ll\gamma$. Furthermore,\[\hat{q_1}^2=\frac{\mu_{q_1}}{m}=\frac{\gamma_1}{\gamma}\circ \rho\text{ a.e.}\qquad\text{ and }\qquad\hat{q_2}^2=\frac{\mu_{q_2}}{m}=\frac{\gamma_2}{\gamma}\circ \rho \text{ a.e.}\]
So,
\begin{align*}
    J_\delta(\gamma_1,\gamma_2)=&\mu_1(N)+\mu_2(N)-2\int_{N\times N}\frac{\sqrt{\gamma_1\gamma_2}}{\gamma}(u,v) \overline{\cos}(d(u,v)/2\delta) d\gamma(u,v)\\
    =&\int_{N\times N}\frac{\gamma_1}{\gamma}\,d\gamma+\int_{N\times N}\frac{\gamma_2}{\gamma}\,d\gamma-2\int_{N\times N}\sqrt{\frac{\gamma_1}{\gamma}(u,v)\frac{\gamma_2}{\gamma}(u,v)} \overline{\cos}(d(u,v)/2\delta) d\gamma(u,v)\\
    =&\int_{\rho^{-1}(N\times N)}\frac{\gamma_1}{\gamma}\circ\rho\,dm+\int_{\rho^{-1}(N\times N)}\frac{\gamma_2}{\gamma}\circ\rho\,dm\\
    &\qquad\qquad\qquad\qquad\qquad-2\int_{\rho^{-1}(N\times N)}\sqrt{\frac{\gamma_1}{\gamma}\circ\rho(x)\frac{\gamma_2}{\gamma}\circ\rho(x)} \overline{\cos}(d(\rho(x))/2\delta) dm\\
    =&\int_{M}\hat{q_1}(x)^2\,dm+\int_{M}\hat{q_2}(x)^2\,dm-2\int_{M}\hat{q_1}(x)\hat{q_2}(x) \overline{\cos}(d(\overline{q_1},\overline{q_2})/2\delta) dm = d_{L^2}(q_1,q_2)
\end{align*}
Thus, \[\WFR_\delta(\mu_{q_1},\mu_{q_2})\leq \sqrt{J_\delta(\gamma_1,\gamma_2)}= 1 \cdot d_{L^2}(\mu_{q_1},\mu_{q_2})\]
\end{proof}
\noindent Finally, we can leverage this continuity result to complete the proof of Theorem \ref{thm:cone_iso}.\\
\begin{proof}[Proof of Theorem \ref{thm:cone_iso}] Let $\mu_1,\mu_2\in \mathcal{M}(N)$ and $q_1,q_2\in L^2(M,\mathcal{C}(N))$ such that $\mu_{1}= \overline{q_1}_*\nu_{q_1}$ and $\mu_{2}= \overline{q_2}_*\nu_{q_2}$. By an argument analogous to the proof of Theorem \ref{thm:maintheorem} we can conclude \[\inf_{\gamma\in \Gamma(M)}d_{L^2}(q_1,q_2*\gamma)=\WFR_\delta(\mu_{1},\mu_{2}).\]
This concludes the the proof of part a.). Let $\mu=\sum_{i=1}^{t_0}a_i\delta_{u_i}$ be a finitely supported measure on $N$.  By~\cite{whitehead1940c1}, $M$ admits a canonical PL structure. Let $t_1\geq t_0$ and subdivide the triangulation of $M$ into $t_1$ simplices given by $\sigma_j$ for $1\leq j\leq t_1$. Let $x\in M$. Thus, there exists $1\leq j\leq t_1$ such that $x\in \sigma_j$. Thus we define \[q(x)=\begin{cases}\left(u_j,\sqrt{\frac{a_j}{\text{area}(\sigma_j)}}\right)&\text{ if } 1\leq j\leq t_0\\
(u_1,0)& \text{ if } n< j\leq t_1\end{cases}.\] Let $U\subseteq N$, then $\mu(U)=\sum\limits_{i|u_i\in U}a_i$. Meanwhile, $\overline{q}^{-1}(U)=\bigsqcup\limits_{i|u_i\in U}\sigma_i$. Thus, \[\int_{\overline{q}^{-1}(U)} \hat{q}^2(x)dm=\sum_{i|u_i\in U}\int_{\sigma_i} \frac{a_i}{\text{area}(\sigma_i)}dm=\sum_{i|u_i\in U}a_i.\] To complete the proof of part b.) we will extend the result to the whole space by continuity. For any $\mu\in\mathcal{M}(N)$, let $\{\mu_n\}\subseteq \mathcal{M}(N)$ be a sequence of finitely supported measures that converges to $\mu$ with respect to the Wasserstein Fisher Rao. In particular, $\{\mu_n\}$ is Cauchy with respect to $\WFR_\delta$. Note that for all $n\in\mathbb{N}$,there exists a piecewise constant $q_n\in  L^2(M,\mathcal{C}(N))$ satisfying \[\mu_n(U)=\int_{\overline{q_n}^{-1}(U)} \hat{q_n}(x)^2dm.\]
Thus, we can construct a sequence of functions given by $q^*_0=q_0$ an for all $n\in \mathbb{N}$,  $q*_{n+1}=q_{n+1}*\gamma_n$ where $\gamma_n$ is a PL homeomorphism from $M$ to $M$ such that \[d_{L^2}(q^*_n, q_{n+1}*\gamma_n)=\WFR_\delta(\mu_n,\mu_{n+1})+\frac{1}{2^{t_0}}.\]
Note that the existence of such a $\gamma_n$ is guaranteed by Lemma \ref{lem:cone_iso_dense}. Since  $\{\mu_n\}$ is Cauchy with respect to $\WFR_\delta$, it follows that $\{q^*_n\}$ is Cauchy with respect to $d_{L^2}$. By completeness of  $(L^2(M,\mathcal{C}(N)),d_{L^2})$, there exists a limit $q\in L^2(M,\mathcal{C}(N))$. Let $U\subseteq N$ open. Thus,
\begin{align*}
    \mu(U)=&\lim\limits_{n\to\infty}\mu_n(U)=\lim\limits_{n\to\infty}\int_{\overline{q_n}^{-1}(U)} \hat{q_n}(x)^2dm=\lim\limits_{n\to\infty}\int_M \hat{q_n}(x)^2\chi_{\overline{q_n}^{-1}(U)}dm\\
    =&\int_M\lim\limits_{n\to\infty} \hat{q_n}(x)^2\chi_{\overline{q_n}^{-1}(U)}dm=\int_M\hat{q}(x)^2\chi_{\overline{q}^{-1}(U)}dm=\int_{\overline{q}^{-1}(U)}\hat{q}(x)^2 dm
\end{align*}
Thus, $\mu=\overline{q}_*\nu_{q}$ This completes the proof of part b.) of the theorem.
\end{proof}

\bibliographystyle{siamplain}

\end{document}